\setlist[enumerate,1]{font=\upshape, itemsep=1ex}\setlist[itemize,1]{font=\upshape, itemsep=1ex}
\def\Z{{\mathbb Z}}
\def\R{{\mathbb R}}
\def\co{\colon\!}
\def\cs{\mathbin{\#}}
\def\inte{\text{int}}
\newcommand{\spinc}{\ifmmode{{\mathfrak s}}\else{${\mathfrak s}$\ }\fi}
\newcommand{\spinct}{\ifmmode{{\mathfrak t}}\else{${\mathfrak t}$\ }\fi}
\newtheorem{theorem}{Theorem}
\newtheorem{lemma}[theorem]{Lemma}
\newtheorem{corollary}[theorem]{Corollary}
\newtheorem{question}[theorem]{Question}
\theoremstyle{definition}
\begin{document}

\title{Connected Sums of Codimension Two Locally Flat Submanifolds}
\author{Charles Livingston}
\address{Charles Livingston: Department of Mathematics, Indiana University, Bloomington, IN 47405 }
\email{livingst@indiana.edu}
\thanks{This work was supported by a grant from the National Science Foundation, NSF-DMS-1505586.}


\begin{abstract}  Let $  X $ and $  Y $  be oriented topological manifolds of dimension $n\!+\!2$, and let $K\!   \subset\! X $ and $J \! \subset\! Y $ be connected, locally flat, oriented, $n$--dimensional submanifolds.   We show that up to orientation preserving homeomorphism there is a well-defined  connected sum $K\! \cs\! J \subset X \! \cs \!Y  $.   For $n = 1$,    the proof is classical, relying on results of Rado and Moise.  For dimensions $n=3$ and  $n \ge 6$, results of  Edwards-Kirby, Kirby, and Kirby-Siebenmann concerning higher dimensional topological manifolds are required.  For $n = 2, 4, $ and $5$, Freedman and Quinn's work on topological four-manifolds is needed.   \end{abstract}

\maketitle
\section{Introduction}     

The proof that the connected sum of $n$--manifolds is well-defined in the topological category is surprisingly deep.  For   $n \ge 6$    it is a consequence of the Annulus Theorem or   Stable Homeomorphism Theorem, proved by Kirby~\cite{MR242165}.  In dimensions  $n\!=\!4$ and $n\!=\!5$   the proof   relies on Freedman and Quinn's work concerning   topological  4--manifolds~\cite{MR1201584,MR679066}.

Proving that connected sums of locally flat $n$--dimensional  submanifolds of manifolds of dimension $n + 2$, or the special case of connected sums of locally flat $n$--knots in $S^{n+2}$, is well-defined  is   more challenging.  Here the  proof relies on the existence and uniqueness of normal bundles in codimension two, results that in turn rely on  the   $s$--cobordism theorem with fundamental group $\Z$. This was proved by Kirby-Siebenmann~\cite[Chapter III, Section 3.4]{MR0645390}  for higher dimensions  and by Quinn~\cite{MR679069} for cobordisms of dimension five.  

Cappell and Shaneson~\cite{MR321099} briefly sketched a proof that   connected sums of knotted $n$--spheres in $S^{n+2}$ are well-defined in dimensions $n\!\ge\!3$.  Our argument roughly follows their approach, but there is a subtlety that seems to have been missed: it appears that the cases of $n=4$ and $n=5$ cannot be proved without results of Freedman-Quinn.    In addition, at the time~\cite{MR321099} was written,  references for topological manifold theory were not yet available.  One purpose of this note is to fill in those gaps. 

We work entirely in the   topological, locally flat, oriented category.  Unless explicitly stated, all manifolds will be assumed to be in that category.   We will show  that if $F_1$ and $F_2$  are  connected,   $n$--dimensional submanifolds of   $(n + 2)$--manifolds $W_1$ and $W_2$, then there is a well-defined connected sum $F_1 \cs F_2 \subset W_1 \cs W_2$ {\color{black} up to homeomorphism of pairs}.  

\smallskip

 \noindent{\bf Outline\ } In Section~\ref{sec:homeo}    we summarize a proof that every orientation preserving homeomorphism  of $S^n$ is isotopic to the identity.  
Section~\ref{sec:conn} presents a proof that the connected sum of $n$--manifolds is well-defined.  For the most part the proof is as would be expected from the proof in the smooth category; an  unexpected feature is the   necessity of  the Stable Homeomorphism Theorem in dimension $n-1$.
In Section~\ref{sec:conn pair} we present the proof that the connected sums of pairs of manifolds of the form  $(W^{n+2}, F^n)$  is well-defined.  Section~\ref{sec:relative} discusses an  alternative proof using a relative form of the Annulus Theorem.

We conclude with an  appendix that provides background and references for some of the key tools used in the argument:  the Alexander Trick, Stable Homeomorphisms, the Annulus Theorem, the Isotopy Extension Theorem, and the existence and uniqueness of normal bundles in codimension two.   We also discuss the proof offered in~\cite{MR321099}.

\smallskip

\noindent{\it Acknowledgements} I am especially grateful to Rob Kirby for discussing with me  the background material that is  based on his work and his joint work with Edwards and with Siebenmann.  One of his suggestions led to a significant simplification of the proof.  Thanks are due to Jim Davis for his repeatedly helping me with other background material.    Mike Freedman and   Kent Orr   also provided  helpful commentary as I prepared this exposition.  Mark Powell read a complete early draft and offered significant improvements, as did Aru Ray.  Shida Wang identified a number of subtle points that had to be addressed.

\section{Homeomorphisms of $S^n$} \label{sec:homeo}

We begin with one of the key background results, that orientation preserving homeomorphisms of $S^n$ are isotopic to the identity.  The proof depends on  some fundamental theorems mentioned above:  the Alexander Trick; the Stable Homeomorphism Theorem for $n\ge 5$,  proved by Kirby in~\cite{MR242165}; and the Stable Homeomorphism Theorem for $n=4$, which is implied by the Annulus Theorem proved by  Quinn~\cite{MR679069}. See Appendices~\ref{app:alex},~\ref{app:annul},~and~\ref{app:stable} for   more details about these three results and relationships between them.
  
 \begin{theorem}\label{thm:iso}  For all $n\ge 0$, every orientation preserving homeomorphism $\phi \co S^n \to S^n$ is isotopic to the identity.
  \end{theorem}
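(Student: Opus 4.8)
The plan is to induct on $n$, using the Alexander Trick as the engine and the Stable Homeomorphism Theorem to get started in the relevant range. For $n=0$ the statement is vacuous (the only orientation-preserving homeomorphism of two points is the identity), and for $n=1,2$ one can invoke classical results (or handle them directly). The heart of the matter is the general inductive step. Suppose $\phi\co S^n\to S^n$ is orientation preserving. I would first arrange that $\phi$ fixes a point: pick any $p\in S^n$; since $S^n$ is homogeneous and (by the inductive apparatus or directly) any two points are connected by an ambient isotopy, we may isotope $\phi$ so that $\phi(p)=p$. Restricting to the complement $S^n\setminus\{p\}\cong\R^n$, we get an orientation-preserving homeomorphism of $\R^n$ fixing $0$.

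Next I would use the Stable Homeomorphism Theorem: every orientation-preserving homeomorphism of $\R^n$ is stable, i.e.\ a composite of homeomorphisms each of which is the identity on some open set. Combined with the Annulus Theorem, this lets one isotope $\phi$ (rel nothing, working on $\R^n$) to a homeomorphism that agrees with the identity on a neighborhood of $0$, or equivalently to a homeomorphism supported in a compact set — one radially ``pushes'' the nonstandard behavior out toward infinity using the annulus structure. Back on $S^n$, this means $\phi$ is isotopic to a homeomorphism that is the identity on a small disk $D$ around $p$. Then $\phi$ restricts to a homeomorphism of the complementary disk $D' = S^n\setminus \inte(D)$ which is the identity on $\partial D'$, and the Alexander Trick (coning from the center of $D'$) gives an isotopy of that restriction to the identity rel boundary. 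Patching with the identity on $D$ yields an isotopy of $\phi$ to $\mathrm{id}_{S^n}$.

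The main obstacle — and the reason the theorem is ``surprisingly deep'' — is precisely the step invoking the Stable Homeomorphism Theorem and the Annulus Theorem to normalize $\phi$ near the fixed point; this is where Kirby's work enters for $n\ge 5$ and Quinn's Annulus Theorem for $n=4$. Everything else (the Alexander Trick, homogeneity, isotopy extension for moving a point) is elementary by comparison. I should be careful about one subtlety flagged in the outline: getting $\phi$ to be genuinely the identity on an honest disk, not merely isotopic to the identity there, requires the annulus/stable-homeomorphism machinery and cannot be finessed. The low-dimensional cases $n\le 3$ are handled separately (Radó–Moise type results), so the argument above is really only needed, and only valid, in the range where the cited theorems apply.
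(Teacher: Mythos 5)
Your ingredients are the right ones (Stable Homeomorphism Theorem plus the Alexander Trick), but the order in which you apply them introduces a gap, and also some unnecessary scaffolding. The Stable Homeomorphism Theorem tells you that $\phi$ (transferred to $\R^n$, or directly on $S^n$) factors as $\phi = h_1 \circ \cdots \circ h_k$ where each $h_i$ is the identity on \emph{some} open set, but these open sets differ from factor to factor, so you cannot conclude that $\phi$ itself is, or can be isotoped to be, the identity on any fixed neighborhood of your chosen point $p$. Your ``push the nonstandard behavior out to infinity'' step is exactly where this would have to be justified, and as written it isn't; composing maps that each fix different open sets need not yield a map fixing any open set, and producing an isotopy of $\phi$ that makes it the identity near $p$ is essentially the conclusion you are trying to prove.

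The paper avoids this entirely by applying the Alexander Trick to each factor $h_i$ separately: a homeomorphism of $S^n$ that fixes an open set fixes an embedded closed ball whose complementary closure is another ball, and on that complementary ball the Alexander Trick gives an isotopy to the identity rel boundary. Thus each $h_i$ is isotopic to the identity, hence so is the composition $\phi$. There is no need to fix a point first, no induction on $n$, and no separate treatment of low dimensions --- the Stable Homeomorphism Theorem holds in all dimensions (classically for $n\le 3$, by Kirby for $n\ge 5$, and by Quinn for $n=4$), so the same argument applies uniformly. If you replace your normalization-near-$p$ step with the per-factor Alexander Trick, your proof collapses to the paper's.
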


 \begin{proof}  The Stable Homeomorphism Theorem
  states that any orientation preserving homeomorphism of $\R^n$ is {\it stable}, meaning that it is a composition of homeomorphisms, each one of which is the identify on some open subspace.  This quickly implies that orientation preserving homeomorphisms of $S^n$ are similarly stable.  If a homeomorphism $f$ of $S^n$ fixes an open subspace, it fixes an embedded closed ball for which the  complement has closure an embedded ball.  The Alexander Trick permits one to construct an isotopy of that  complementary closed ball, fixing its boundary, that carries $f$   to the identity.

 \end{proof}

\medskip
\noindent{\bf Note.} This theorem does not hold in the smooth setting.  Milnor's first examples of exotic structures on $S^7$ could be built by gluing together two 7--balls via a nonstandard diffeomorphism of $S^6$.  See reference~\cite{MR82103}.  On the other hand, a theorem proved independently by Cerf~\cite{MR140120} and by Palais~\cite{MR116352} states that if an orientation preserving diffeomorphism of $S^n$ extends to a diffeomorphism of  $B^{n+1}$, then it is smoothly isotopic to the identity.  This is all that is needed in the proof that connected sums are well-defined in the smooth category.


\section{Connected sums of manifolds}\label{sec:conn}
In this section we prove that  the connected sum of  $n$--manifolds   is well-defined.  In doing so, we   highlight something  unexpected, the effect of which is that the proof that connected sums of $n$--manifolds is well-defined depends on the Stable Homeomorphism Theorem in dimension  $(n\!-\!1)$ {\color{black} as well as the Annulus Theorem in dimension $n$}. 

 
 We begin with the definition of connected sums.  
Let $W_1$ and $W_2$ be connected oriented $n$--manifolds.  Choose  an orientation preserving embedding  $\phi  \co \R^n \to W_1$ and an orientation reversing embedding $\psi    \co \R^n \to W_2$.    The connected sum is a quotient of  the disjoint union of punctured copies of $W_1$  and $W_2$:
\[
W_1 \cs_{\phi, \psi} W_2 \colonequals \Big(      \big( W_1 \setminus \inte(\phi (B^n)) \big) \   \text{\Large{$\sqcup$}}\     \big( W_2 \setminus \inte(\psi (B^n)) \big) \Big) / \ \! \! \sim\, ,
\]
where the  equivalence relation  identifies $\phi (\theta)  $ with $\psi  (\theta  )$ for $\theta \in S^{n-1}$. We chose embeddings of $\R^n$ to ensure that $\partial \phi(B_n) $ is locally flat.   In addition, if one wishes to write down the finer details of the argument, the map from $\R^n$ provides  a coordinate system in which to work.  One could instead start with embeddings of $B^n$ that have locally flat boundaries, and then apply Brown's theorem~\cite{MR0133812} on the existence of collar neighborhoods of boundaries of manifolds.

\begin{theorem}  The oriented homeomorphism type of $W_1 \cs_{\phi, \psi} W_2  $ is independent of the choice of $\phi$ and $\psi$. 
\end{theorem}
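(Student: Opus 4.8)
The plan is to reduce everything to a single fact---that any two orientation preserving embeddings $\R^n \to W_1$ (and likewise into $W_2$) are ambiently isotopic, up to reparametrizing $S^{n-1}$---and then feed that into the quotient construction. It suffices to vary $\phi$ and $\psi$ one at a time, so fix $\psi$ and let $\phi, \phi' \co \R^n \to W_1$ be two orientation preserving embeddings. The basic mechanism is: if $H_t$ is an ambient isotopy of $W_1$ with $H_0 = \text{id}$ and $H_1 \circ \phi = \phi'$, then $H_1$ carries $W_1 \setminus \inte(\phi(B^n))$ homeomorphically onto $W_1 \setminus \inte(\phi'(B^n))$ and sends $\phi(\theta)$ to $\phi'(\theta)$ on the boundary sphere, so $H_1 \sqcup \text{id}_{W_2}$ descends to a homeomorphism $W_1 \cs_{\phi,\psi} W_2 \to W_1 \cs_{\phi',\psi} W_2$. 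The same conclusion holds when $H_1 \circ \phi$ and $\phi'$ differ only by precomposition with an orientation preserving homeomorphism of $S^{n-1}$, since then the two boundary identifications are isotopic---orientation preserving self-homeomorphisms of $S^{n-1}$ are isotopic to the identity by Theorem~\ref{thm:iso}---and gluing along isotopic identifications gives homeomorphic results, using a boundary collar.

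So the task is to produce such an isotopy, and I would do this in three reductions. First, since $W_1$ is connected, the Isotopy Extension Theorem lets me slide $\phi(0)$ along a path to $\phi'(0)$; after this ambient isotopy (absorbed into $\phi$ by the mechanism above) I may assume $\phi(0) = \phi'(0)$. Second, because $\phi$ and $\phi'$ are defined on all of $\R^n$, replacing $\phi$ by $\phi \circ (x \mapsto \epsilon x)$ leaves the connected sum unchanged---one checks this directly, the homeomorphism being a collar-slide inside $\phi(B^n)$ and the identity elsewhere---so I may shrink both images into a single coordinate chart $U \cong \R^n$ of $W_1$. Third, the problem is now local: given two orientation preserving, locally flat embeddings $g, g' \co \bar B^n \to \R^n$ agreeing at $0$, I need a compactly supported ambient isotopy of $\R^n$ taking $g(\bar B^n)$ onto $g'(\bar B^n)$; being compactly supported, it extends by the identity back to $W_1$.

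The third step is where topological manifold theory enters. After a rescaling I may assume $g(\bar B^n), g'(\bar B^n) \subset \inte B^n$. The locally flat sphere $g(S^{n-1})$, together with $S^{n-1}$, bounds a region which by the Annulus Theorem in dimension $n$ is homeomorphic to $S^{n-1} \times [0,1]$; I use this product structure to build a homeomorphism of $\R^n$, compactly supported near $B^n$, carrying $g(S^{n-1})$ onto $\tfrac12 S^{n-1}$ and hence the ball it bounds, namely $g(\bar B^n)$, onto $\tfrac12 \bar B^n$. Doing the same for $g'$ reduces me to two homeomorphisms $\bar B^n \to \tfrac12 \bar B^n$ whose restrictions to $S^{n-1}$ differ by an orientation preserving self-homeomorphism of $S^{n-1}$; that difference is absorbed by Theorem~\ref{thm:iso} exactly as in the first paragraph. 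Note that Theorem~\ref{thm:iso} is being applied in dimension $n-1$, so its proof invokes the Stable Homeomorphism Theorem in dimension $n-1$; this, together with the Annulus Theorem in dimension $n$, is the slightly surprising pair of inputs highlighted above.

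The main obstacle is the third reduction: showing that an arbitrary locally flat embedded $n$-ball in $\R^n$ is ambiently isotopic to the standard one. This is essentially the locally flat Schoenflies statement, and it is precisely the place where the deep results---the Annulus Theorem and, through Theorem~\ref{thm:iso}, the Stable Homeomorphism Theorem in dimension $n-1$---are unavoidable; everything else is point-set topology and collar manipulation. A secondary point needing care is the bookkeeping in the first two reductions, checking that each ambient isotopy and rescaling respects the gluing pattern $\phi(\theta) \sim \psi(\theta)$ so that it genuinely descends to the connected sum.
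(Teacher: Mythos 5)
Your proposal is correct and follows essentially the same route as the paper: reduce to a local comparison of two balls via homogeneity and shrinking, apply the Annulus Theorem in dimension $n$ to match images, then apply Theorem~\ref{thm:iso} (hence the Stable Homeomorphism Theorem in dimension $n-1$) to match parametrizations on $S^{n-1}$, finishing with a collar. The framing is slightly different (you rescale $\phi$ and work inside a single chart, whereas the paper nests $\phi'(B^n)$ inside $\phi(B^n)$ by a homeomorphism of $W_1$), but the decomposition and the key inputs are the same.
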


 \begin{proof}

    We prove the  independence on $\phi $.  Let $ {\phi'}$ be a second embedding.  By applying a sequence of homeomorphisms, we modify $\phi'$ in a way that doesn't affect the homeomorphism type of  $W_1 \cs_{\phi', \psi} W_2  $, eventually arriving at $\phi$.
 \begin{enumerate}
 \item  {$\boldsymbol{   \phi' (0) =  {\phi}(0).}$}  There is a homeomorphism $h$ of $W_1$ for which $h( {\phi'}(0)) = \phi (0).$  Thus, we can assume $\phi (0) =  {\phi'}(0)$.  This uses the connectivity of $W_1$.
 
 \item  $\boldsymbol{ \phi'(B^n)  \subset \text{int}(\phi(B^{n}))}.$   For any $\epsilon$ satisfying  $0<\epsilon <1$, there is a homeomorphism $h $ of $\R^n$ that is the identity outside of $2B^n$  and for which $h(B^n) \subset \epsilon B^n$.   This can be used to construct a  homeomorphism $h$ of $W_1$ for which $h(\phi'(B^n)) \subset \text{int}(\phi(B^{n}))$.   
 
 \item  $\boldsymbol{ \phi'(B_n) =   \phi(B_n)}.$  The Annulus Theorem provides coordinates along which $\phi'(B^n)$ can be expanded to match $\phi(B_n)$.  Notice that to do so, one requires tubular neighborhoods of $\partial \phi'(B^n) $ and  $\partial \phi (B^n) $.  As an alternative, this statement can also be expressed in terms of uniqueness of tubular neighborhoods of points, which also relies on the Annulus Theorem.
  
 \item 
  $\boldsymbol{ \text{For all } x \in \partial B^n \cong S^{n-1},   \phi'(x) =     \phi(x).}$  By Theorem~\ref{thm:iso} the restrictions to $\partial B^n$  of the composition  $\phi'\circ \phi^{-1}$ is isotopic to the identity.    We can then use a collar neighborhood of   $\partial B^n$  to extend that isotopy to all of $W_1$.

 \end{enumerate}

 \end{proof}

   \noindent{\bf Observation.}  The map $\phi$ defines a canonical embedding    $\Gamma_{\phi,\psi}\co S^{n-1} \to W_1 \cs_{\phi, \psi} W_2$.   The proof above demonstrates the following.  
   
   \begin{theorem}\label{thm:map}
   The pair $( W_1 \cs_{\phi, \psi} W_2, \Gamma_{\phi, \psi})$    is independent of $\phi$ and $\psi$ up to homeomorphism.
   \end{theorem}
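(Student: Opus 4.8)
The plan is to observe that the homeomorphism between connected sums produced in the proof of the preceding theorem can be tracked on the canonical gluing sphere, so that no new tools are required beyond a little bookkeeping. Recall that $\Gamma_{\phi,\psi}$ is the embedding $\theta \mapsto [\phi(\theta)] = [\psi(\theta)]$, the image of $\phi|_{S^{n-1}}$ in the quotient. As in the preceding proof I would separate the problem into varying $\psi$ with $\phi$ fixed and varying $\phi$ with $\psi$ fixed, composing the two to pass from an arbitrary pair $(\phi',\psi')$ to $(\phi,\psi)$ (say first $(\phi',\psi') \rightsquigarrow (\phi',\psi)$, then $(\phi',\psi) \rightsquigarrow (\phi,\psi)$). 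The $\psi$--variation costs nothing extra: every homeomorphism used to realize it is supported in $W_2$ and is the identity on $W_1$, hence is the identity on the points $\phi(\theta)$, so the induced homeomorphism $W_1 \cs_{\phi,\psi'} W_2 \to W_1 \cs_{\phi,\psi} W_2$ carries $\Gamma_{\phi,\psi'}$ to $\Gamma_{\phi,\psi}$ on the nose. All of the content is in the $\phi$--variation.

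For the $\phi$--variation I would re-run steps (1)--(4) of the preceding proof, but at each stage replace the current embedding $\phi''\co\R^n \to W_1$ by $h\circ\phi''$, where $h$ is the homeomorphism of $W_1$ that step supplies. Each such $h$ restricts to a homeomorphism of the punctured manifold $W_1 \setminus \inte(\phi''(B^n))$ onto $W_1 \setminus \inte(h\phi''(B^n))$ that respects the gluing, hence induces a homeomorphism $W_1 \cs_{\phi'',\psi} W_2 \to W_1 \cs_{h\phi'',\psi} W_2$ sending $\Gamma_{\phi'',\psi}(\theta) = [\phi''(\theta)]$ to $[h\phi''(\theta)] = \Gamma_{h\phi'',\psi}(\theta)$; that is, it is parametrization--preserving on the gluing sphere. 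After steps (1)--(3) the embedding satisfies $\phi''(B^n) = \phi(B^n)$, so $\phi''|_{S^{n-1}}$ and $\phi|_{S^{n-1}}$ agree up to an orientation--preserving self-homeomorphism $\beta$ of $S^{n-1}$ (orientation--preserving because both are restrictions of orientation--preserving embeddings of $\R^n$). Step (4) is the place where Theorem~\ref{thm:iso} is used: $\beta$ is isotopic to the identity, and, using a collar of the locally flat sphere $\phi(S^{n-1}) = \partial\phi(B^n)$ in $W_1$, one spreads that isotopy to a homeomorphism $h_4$ of $W_1$ supported in the collar with $h_4\circ\phi''|_{S^{n-1}} = \phi|_{S^{n-1}}$ and $h_4(\phi(B^n)) = \phi(B^n)$. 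The map on connected sums induced by $h_4$ then carries $\Gamma_{\phi'',\psi}$ exactly onto $\Gamma_{\phi,\psi}$, and composing the maps from steps (1)--(4) with the $\psi$--step gives the desired homeomorphism intertwining $\Gamma_{\phi',\psi'}$ and $\Gamma_{\phi,\psi}$.

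The main obstacle — indeed the only substantive point — is step (4). One must verify that after the geometric normalization of steps (1)--(3) the residual ambiguity really is an orientation--preserving self-homeomorphism of $S^{n-1}$ (a matter of pinning down orientation conventions for $\partial B^n$ sitting inside $\partial\phi(B^n)$), that Theorem~\ref{thm:iso} then removes it, and that the collar of $\phi(S^{n-1})$ — whose existence relies on local flatness together with the collaring results recalled in the appendix — allows the isotopy to be realized as an ambient homeomorphism of $W_1$ that simultaneously preserves $\phi(B^n)$, so that the resulting homeomorphism still descends to the connected sum. Once these are checked, the remaining steps are the routine observation already used above, that a homeomorphism of a summand induces a homeomorphism of the connected sum compatible with the canonical sphere $\Gamma$.
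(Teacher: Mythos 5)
Your proposal is correct and follows exactly the approach the paper has in mind: the paper offers no separate proof, simply remarking that ``the proof above demonstrates the following,'' and your write-up is precisely the bookkeeping that turns that remark into a proof, tracking the canonical sphere through each of steps (1)--(4) and through the $\psi$--variation. The one small streamlining available is that the final homeomorphism $h_4$ need only be constructed on the punctured manifold $W_1 \setminus \inte(\phi(B^n))$, using a one-sided collar of $\phi(S^{n-1})$ there, so one does not even need to arrange that it extends across $\phi(B^n)$.
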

   
   This is a technical extension, but there is reason to distinguish the knot $\Gamma_{\phi,\psi}(S^n)$ from the embedding $\Gamma_{\phi,\psi}$.   For instance, there  exists a smooth embedding  of $S^6$ into $S^8$ with unknotted image, but for which there is no diffeomorphism of $S^8$ carrying the embedding to the standard embedding.  But note that Theorem~\ref{thm:map} does hold in the smooth category.

 
\section{Connected sums of codimension two submanifolds}\label{sec:conn pair}

 Let $n >0$  and suppose that $F_1$ and $F_2$ are $n$--dimensional  oriented, locally flat, connected submanifolds of $(n\!+\!2)$--dimensional  oriented manifolds $W_1$ and $W_2$.  Local flatness ensures that there exists  an   orientation preserving embedding $\phi \co \R^{n+2}  \to W_1$ such that $\phi^{-1}(F_1) = \R^n$. We view such an embedding as a map of pairs: $\phi \co (\R^{n+2}, \R^n)  \to (W_1, F_1)$.  
   Similarly  choose $\psi \co (\R^{n+2}, \R^n) \to (W_2, F_2)$.   We have the unit balls $B^{n+2} \subset R^{n+2}$ and   $B^{n}\subset  \R^n \subset \R^{n+2}$.  The connected sum of the submanifolds  is defined as 
\[
F_1 \cs_{\phi, \psi}  F_2  \colonequals \Big(      \big( W_1 \setminus \inte(\phi (B^{n+2})), F_1 \setminus  
 \inte(\phi (B^n))  \big)\   \text{\Large{$\sqcup$}}\     \big( W_2 \setminus  \text{int}(\psi (B^{n+2}) ), F_2 \setminus  \text{int}(\psi (B^n) )  \big)\   \Big) /   \sim\ .
 \]
As before, the equivalence relation identifies  $\phi ( \theta )  $ with $\psi  (\theta  )$ for $\theta \in S^{n+1}$.    It is straightforward to show that $F_1 \cs_{\phi, \psi}  F_2 \subset W_1 \cs_{\phi, \psi} W_2$ is locally flat.   Our main result is the following.

\begin{theorem}\label{thm:main1}  Given pairs of embeddings, $(\phi_1, \psi_1)$ and  $(\phi_2, \psi_2)$, the manifold  pairs
\[
F_1 \cs_{\phi_1, \psi_1}  F_2 \subset W_1 \cs_{\phi_1, \psi_1} W_2  \text{\ \ and\ \ }   F_1 \cs_{\phi_2, \psi_2}  F_2 \subset W_1 \cs_{\phi_2, \psi_2} W_2
\] 
are oriented homeomorphic.  A homeomorphism can be chosen that identifies the embeddings of the  canonical splitting $(n\!-\!1)$--spheres.
\end{theorem}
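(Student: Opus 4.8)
The plan is to follow the four-step scheme of the absolute case (Theorem on $W_1\cs_{\phi,\psi}W_2$), but to carry out every move through homeomorphisms and isotopies of the \emph{pair} $(W_1,F_1)$, tracking the submanifold at each stage; the new inputs are isotopy extension in the locally flat category and, crucially, the existence and uniqueness of normal bundles in codimension two. By the usual symmetry argument it is enough to prove independence on the embedding into $W_1$: fix $\psi=\psi_1=\psi_2$ and set $\phi=\phi_1$, $\phi'=\phi_2$. I will produce an orientation-preserving homeomorphism $H\co(W_1,F_1)\to(W_1,F_1)$ such that $H(\phi(B^{n+2}))=\phi'(B^{n+2})$, $H(\phi(B^{n}))=\phi'(B^{n})$, and $H\circ\phi=\phi'$ on the boundary sphere pair $(S^{n+1},S^{n-1})$ (up to an isotopy of pairs fixing nothing). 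Extending $H$ by the identity on the $W_2$-side then gives an oriented homeomorphism of pairs between $F_1\cs_{\phi_1,\psi_1}F_2\subset W_1\cs_{\phi_1,\psi_1}W_2$ and $F_1\cs_{\phi_2,\psi_2}F_2\subset W_1\cs_{\phi_2,\psi_2}W_2$ carrying one canonical splitting $(n\!-\!1)$--sphere to the other.

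I would build $H$ as a composition of four moves. \emph{(1)} Using connectivity of $F_1$ together with the local-flatness coordinates along $F_1$ (equivalently, isotopy extension for the locally flat pair), arrange $\phi'(0)=\phi(0)$. \emph{(2)} Working in the $\phi$-coordinates $(\R^{n+2},\R^n)$, apply a radially symmetric shrink of $\R^{n+2}$, supported in $2B^{n+2}$ and hence preserving $\R^n$, to arrange $\phi'(B^{n+2})\subset\inte(\phi(B^{n+2}))$ and therefore also $\phi'(B^{n})\subset\inte(\phi(B^{n}))$. \emph{(3)} Expand the small ball pair out to the large one: one must show that the region between them, namely $\bigl(\,\phi(B^{n+2})\setminus\inte(\phi'(B^{n+2})),\ \phi(B^{n})\setminus\inte(\phi'(B^{n}))\,\bigr)$, is homeomorphic rel its ends to a product $\bigl(S^{n+1}\times[0,1],\ S^{n-1}\times[0,1]\bigr)$ — a codimension-two ("relative") form of the Annulus Theorem. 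This is where normal bundles in codimension two enter: the $D^2$-bundle neighborhood of $F_1$ is used to reduce the statement near $F_1$ to a bundle version of the annulus theorem, and uniqueness of that bundle is what guarantees the two ball pairs are comparably framed. After this step $\phi$ and $\phi'$ agree on $B^{n+2}$ and on $B^{n}$ as sets. \emph{(4)} Using collars of $S^{n+1}$ in $W_1\setminus\inte(\phi(B^{n+2}))$ and of $S^{n-1}$ in $F_1\setminus\inte(\phi(B^{n}))$, push the remaining discrepancy into the boundary, so that what is left is an orientation-preserving self-homeomorphism $f$ of the \emph{unknotted} pair $(S^{n+1},S^{n-1})$, which must be shown to be isotopic to the identity through self-homeomorphisms of the pair.

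I expect step (4) to be the real obstacle, and it is what forces Freedman--Quinn in dimensions $n=4$ and $n=5$. Thicken $S^{n-1}$ to its tubular neighborhood $S^{n-1}\times D^2\subset S^{n+1}$ (trivial, since the sphere is unknotted), with complementary region the solid torus $D^{n}\times S^1$ (from $S^{n+1}=\partial(D^n\times D^2)$). Uniqueness of normal bundles lets us isotope $f$ so that it preserves $S^{n-1}\times D^2$ and there acts as a $D^2$-bundle automorphism; since $f|_{S^{n-1}}$ is isotopic to the identity by Theorem~\ref{thm:iso} and the structure group is connected, $f$ can be further isotoped to be the identity on $S^{n-1}\times D^2$, hence the identity on $\partial(D^n\times S^1)=S^{n-1}\times S^1$. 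One is reduced to showing that a self-homeomorphism of $D^n\times S^1$ that is the identity on the boundary is isotopic rel boundary to the identity — a statement whose proof in the relevant low dimensions rests, in the end, on the $s$--cobordism theorem with fundamental group $\Z$ (Kirby--Siebenmann in high dimensions, Quinn's five-dimensional $s$--cobordism theorem in the remaining cases), which is precisely the point where $n=4$ and $n=5$ require the work of Freedman and Quinn; see the appendix.

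Once $f$ has been isotoped to the identity through homeomorphisms of the pair, the isotopy is extended over $W_1$ via the two collars to complete the construction of $H$. An identical argument (or the symmetry of the connected sum together with independence on $\phi$) handles independence on $\psi$, and composing the two homeomorphisms proves the theorem. As indicated in Section~\ref{sec:relative}, steps (3) and (4) may be repackaged into a single appeal to a relative Annulus Theorem, which is likely the cleaner route when the details are written out in full.
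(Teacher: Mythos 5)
Your steps (1)--(3) track the paper's first two lemmas reasonably well, and you correctly identify the two deep inputs (existence/uniqueness of codimension-two normal bundles, and a relative form of the annulus theorem). But your step (4) diverges from the paper's main argument in a way that opens real gaps. The paper never passes to a free-standing self-homeomorphism of the sphere pair $(S^{n+1},S^{n-1})$. Instead, Lemma~\ref{lem:unique} produces a homeomorphism of the \emph{ball pair} $(B^n\times B^2,\ B^n\times\{0\})$ that respects the product structure, $\phi(x,y)=(\phi_1(x),\phi_2(x,y))$, and Lemma~\ref{lem:ident} then kills it: after normalizing $\phi_1$ to the identity via Theorem~\ref{thm:iso} and the Alexander Trick, the function $\phi_2$ is a map $B^n\to\text{Homeo}_+(B^2)$, and since $B^n$ is \emph{contractible} and $\text{Homeo}_+(B^2)\simeq S^1$ is path-connected, this map is nullhomotopic with no condition on $n$. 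By restricting to the boundary sphere pair and splitting $S^{n+1}=(S^{n-1}\times D^2)\cup(D^n\times S^1)$, you throw away the contractible base. Your normalization on $S^{n-1}\times D^2$ then requires nullhomotoping a map $S^{n-1}\to\text{Homeo}_+(B^2)$, i.e.\ the vanishing of $\pi_{n-1}(\text{Homeo}_+(B^2))\cong\pi_{n-1}(S^1)$. This is fine for $n\ge3$ but is $\Z$ for $n=2$; the paper's Section~\ref{sec:relative} (which is the route your sketch most closely follows) explicitly flags this and says a homological argument is needed there — your outline ignores it.

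The second and more serious problem is the appeal to ``a self-homeomorphism of $D^n\times S^1$ that is the identity on the boundary is isotopic rel boundary to the identity.'' This is not a known consequence of the $s$--cobordism theorem with $\pi_1=\Z$, and you offer no argument; for small $n$ the statement is false or delicate (for $n=1$, $D^1\times S^1$ is the annulus and the relevant mapping class group is $\Z$, generated by the Dehn twist), and even when extra structure is available you have discarded it by not remembering that $f$ extends over the ball pair as a product map. Relatedly, you attribute the need for Freedman--Quinn to this step (4), but in the paper's argument Freedman--Quinn enters only through the existence and uniqueness of normal bundles, which relies on the $s$--cobordism theorem with $\pi_1=\Z$ (Quinn in dimension five); that is your step (3). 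Once the product structure is in hand, the paper's Lemma~\ref{lem:ident} is elementary. To repair your outline, keep the interior: after step (3), $\phi'\circ\phi^{-1}$ is a product-preserving self-homeomorphism of $(B^n\times B^2,B^n\times\{0\})$, and the paper's Lemma~\ref{lem:ident} finishes the proof directly without ever touching $D^n\times S^1$.
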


 The proof follows readily from three lemmas.  The first is elementary.  The second is the deepest, depending on the existence and uniqueness theorems   of normal bundles of codimension two submanifolds.   The third, though slightly technical, is elementary.  In the second two, we change our perspective, viewing $(\R^{n+2}, \R^n)$ as the pair $(\R^n \times \R^2, \R^n \times \{0\})$.
   
  \begin{lemma} Let  $F \subset W$ be a connected, codimension-two, locally flat submanifold and let  $\phi \co (\R^{n+2}, \R^n) \to (W, F)$  and  $\phi' \co (\R^{n+2}, \R^n) \to (W, F)$ be embeddings. Then there is an orientation preserving {\color{black} self-homeomorphism} of $(W,F)$ that carries  $\phi'$ to an embedding $\phi''  \co (\R^{n+2}, \R^n) \to (W, F)$ for which   $\phi'' (  (B^{n+2}, B^n) )\subset \phi(  (B^{n+2}, B^n))$.
  
 \end{lemma}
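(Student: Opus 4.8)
The plan is to carry out, for the pair $(W,F)$, the analogues of steps (1) and (2) from the proof that the connected sum of $n$--manifolds is well-defined; no normal bundle input is needed here, as local flatness enters only through the given pair--charts. Writing $(\R^{n+2},\R^n)=(\R^n\times\R^2,\ \R^n\times\{0\})$, I would first reduce to the case $\phi'(0)=\phi(0)$, and then contract $\phi'$ radially.

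\emph{Arranging $\phi'(0)=\phi(0)$.} Both points lie on $F$, since $0\in\R^n$; as $F$ is connected there is an arc $\gamma\subset F$ joining them. Cover $\gamma$ by finitely many images of pair--charts $(\R^{n+2},\R^n)\to(W,F)$ and subdivide $\gamma$ so that each piece lies in one chart. If $a,b\in\R^n$ are the endpoints of such a piece in chart coordinates, choose a compactly supported isotopy of $\R^n$ carrying $a$ to $b$ and take its product with the identity of $\R^2$; this is a compactly supported isotopy of $(\R^{n+2},\R^n)$, which transports through the chart and extends by the identity to an ambient isotopy of $(W,F)$. Composing the finitely many resulting self--homeomorphisms gives one, call it $H$, isotopic to the identity, with $H(\phi'(0))=\phi(0)$. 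Replacing $\phi'$ by $H\circ\phi'$, we may assume $\phi'(0)=\phi(0)=:p$.

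\emph{Contracting $\phi'$.} Since $p=\phi(0)\in\phi(\inte B^{n+2})$ and $\phi'$ is continuous with $\phi'(0)=p$, there is $\epsilon\in(0,1)$ with $\phi'(\epsilon B^{n+2})\subset\phi(\inte B^{n+2})$; intersecting with $F$ gives $\phi'(\epsilon B^n)\subset\phi(\inte B^n)$. Let $\rho\co(\R^{n+2},\R^n)\to(\R^{n+2},\R^n)$ be a radial homeomorphism equal to multiplication by $\epsilon$ on $B^{n+2}$ and to the identity outside $2B^{n+2}$; it preserves $\R^n$ and is isotopic to the identity. Define $\Phi$ to be $\phi'\circ\rho\circ(\phi')^{-1}$ on $\phi'(\R^{n+2})$ and the identity elsewhere; since $\rho$ is supported in $2B^{n+2}$ this is a well-defined orientation preserving self--homeomorphism of $(W,F)$. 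Then $\phi''\defeq\Phi\circ\phi'=\phi'\circ\rho$ is an embedding of pairs with $\phi''((B^{n+2},B^n))=\phi'((\epsilon B^{n+2},\epsilon B^n))\subset\phi((\inte B^{n+2},\inte B^n))\subset\phi((B^{n+2},B^n))$, and $\Phi\circ H$ is the desired self--homeomorphism.

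The only points needing care — and they are routine — are that the cut--off translations and the radial contraction are homeomorphisms \emph{of pairs}, which holds because each preserves the linear subspace $\R^n\times\{0\}$, and that they, hence their images in $\mathrm{Homeo}(W,F)$, preserve orientation, which holds because each is isotopic to the identity. I do not expect any genuine obstacle in this lemma: its content is bookkeeping with coordinate charts, consistent with the text's billing of it as elementary.
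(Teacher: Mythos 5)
Your proof is correct and follows essentially the same route as the paper's: moving $\phi'(0)$ to $\phi(0)$ along $F$ by a chain of compactly supported pair-isotopies through charts, then shrinking by a radial pair-homeomorphism supported in a chart near $\phi(0)$. The paper phrases the first step as an open-and-closed argument on $F$ and states the second step as a single local observation, but the underlying content and the emphasis on working with the pair $(W,F)$ throughout are the same.
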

  
 {\color{black}  \begin{proof}   The necessity of working with the pair $(W,F)$ rather than simply with $W$ has been missed in previous discussions, so we will provide a few more details here.  The proof follows readily from the following two observations.
   
\begin{enumerate}
   
 \item Let $a$ and $b$ be points on $F$.  Then there is an orientation preserving  homeomorphism $h\co  (W,F) \to   (W,F)$ for which $h(a) = b$.  To prove this, consider the set 
   \[
 B =   \{ x \in F \ \big| \  \text{there exists an }  h \co  (W,F) \to   (W,F)  \text{ for which } h(a) = x  \}.  
   \]
 Working locally, one can prove that $B$ is both open and closed. 

 \item   To ensure that $\phi'' (  (B^{n+2}, B^n) )\subset \phi(  (B^{n+2}, B^n))$ we can again work locally, using the following observation.  Let $U$ be an arbitrary neighborhood of $0 \in \R^{n+2}$.  Then there is a homeomorphism     $ h \co (\R^{n+2} , \R^n ) \to   (\R^{n+2} , \R^n)$ for which:  $h(B^{n+2}) \subset U$   and for which $h(x) = x$ for all $x$ with $\|x\| \ge 2$.
   
     \end{enumerate}

   \end{proof}
 } 
      
\begin{lemma}\label{lem:unique} Let $\phi \co (B^n \times B^2, B^n \times \{0\})   \to  \inte \, ((B^n \times B^2, B^n \times \{0\}) )  $  be an embedding.  Assume that $\phi$ extends to an embedding of an open neighborhood of $B^n \times B^2 \subset \R^{n+2}$.  Then there is an ambient  isotopy of $(\R^n \times \R^2,   \R^n \times \{0\})$ carrying $\phi$ to an embedding $\phi'$ such that $\phi'  ( (B^n \times B^2, B^n \times \{0\}) ) =     (B^n \times B^2, B^n \times \{0\}) $.  Furthermore, the isotopy can be chosen so that $\phi'$ is of the form $\phi'(x,y) = (\phi_1(x), \phi_2(x,y))$.
  \end{lemma}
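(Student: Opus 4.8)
The plan is to bring $\phi$ to standard form in two stages: first straighten it along the submanifold directions $\R^n \times \{0\}$, using only the results of the previous sections, and then straighten it in the normal $\R^2$--directions, where the existence and uniqueness of codimension two normal bundles supplies the only deep input. Throughout, the hypothesis that $\phi$ extends to an embedding of an open neighborhood of $B^n \times B^2$ plays two roles: it guarantees that $\phi(B^n \times \{0\})$ is bicollared in $\R^n \times \{0\}$ and that $\phi$, near $B^n \times \{0\}$, restricts to the total space of a normal $B^2$--bundle, and it is what allows the various ambient isotopies we construct to be taken with compact support and extended by the identity.

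\emph{Stage 1.} Set $\phi_0 = \phi|_{B^n \times \{0\}}$, an orientation preserving embedding of $B^n$ into $\inte(B^n)$ whose image is a bicollared ball. First I would apply the Annulus Theorem in dimension $n$ to get a compactly supported ambient isotopy of $\R^n$ carrying $\phi_0(B^n)$ onto $B^n$; after it, $\phi_0$ is a homeomorphism of $B^n$ onto itself, whose restriction to $S^{n-1}$ is an orientation preserving homeomorphism of $S^{n-1}$ and so, by Theorem~\ref{thm:iso}, isotopic to the identity. Extending that isotopy to $\R^n$ by the Isotopy Extension Theorem and then applying the Alexander Trick to the resulting homeomorphism of $B^n$ rel $\partial B^n$ makes $\phi_0$ the identity on $B^n$; a further isotopy supported near $S^{n-1}$, using Brown's collar theorem~\cite{MR0133812}, makes it the identity on $(1 + \delta)B^n$ for some small $\delta > 0$. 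Crossing each of these ambient isotopies of $\R^n$ with the identity of $\R^2$ yields ambient isotopies of the pair $(\R^n \times \R^2, \R^n \times \{0\})$, after which --- replacing $\phi$ by its image under the composite --- we may assume $\phi$ is the inclusion on $(1 + \delta)B^n \times \{0\}$ and still extends over a neighborhood of $B^n \times B^2$.

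\emph{Stage 2.} Now $\phi$ restricted to $\inte\big((1+\delta)B^n\big) \times B^2$ realizes the trivial normal $B^2$--bundle of the open, locally flat, codimension two submanifold $X = \inte\big((1+\delta)B^n\big) \times \{0\}$ of $\R^n \times \R^2$, with zero section $X$ and projection $(x,y) \mapsto x$; the product $X \times B^2$ realizes a second such normal bundle. Invoking the existence and uniqueness of normal bundles of locally flat codimension two submanifolds --- Kirby--Siebenmann~\cite{MR0645390} for $n + 2 \ge 6$, Quinn~\cite{MR679069} for $n + 2 = 5$, and classical facts in lower dimensions --- I would obtain an ambient isotopy $H_t$ of $\R^n \times \R^2$, fixing $X$ pointwise (hence preserving $\R^n \times \{0\}$) and supported in a neighborhood of $B^n \times B^2$, with $H_1$ taking the product normal bundle onto $\phi$'s by a fiber preserving map covering the identity of $X$. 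Then $\phi' = H_1^{-1} \circ \phi$ has the form $\phi'(x,y) = (x, g_x(y))$ with each $g_x$ a homeomorphism of $B^2$ onto itself fixing $0$, so $\phi'(\{x\} \times B^2) = \{x\} \times B^2$ for every $x \in B^n$; hence $\phi'(B^n \times B^2) = B^n \times B^2$ and $\phi'$ has the required product form, with $\phi_1 = \mathrm{id}$. Concatenating the ambient isotopies of the two stages proves the lemma.

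\emph{The main obstacle.} The heart of the argument is Stage 2, and in particular the \emph{uniqueness} half of the normal bundle statement in the range that includes $n + 2 = 5$, i.e. $n = 3$ (and, via Freedman--Quinn, $n = 2$ and $n = 4$): this is exactly the point at which the five--dimensional $s$--cobordism theorem with fundamental group $\Z$ is unavoidable. Everything in Stage 1 and the patching is routine once Theorem~\ref{thm:iso}, the Annulus Theorem, the Alexander Trick, the Isotopy Extension Theorem, and collar uniqueness are available; the only thing requiring genuine care there is keeping the supports of the intermediate ambient isotopies of the pair controlled near $\partial B^n$, so that they can be composed and extended by the identity without disturbing the straightening achieved earlier.
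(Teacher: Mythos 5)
Your proof is correct and rests on the same core tools as the paper's: the Annulus Theorem in dimension $n$ plus the Isotopy Extension Theorem to straighten the image of the zero section, then the existence and uniqueness of codimension two normal bundles to straighten the $B^2$--directions, with the latter correctly identified as the deep input (and the $n=3$ case, via the $5$--dimensional $s$--cobordism theorem, as the delicate one). The one real difference is a division-of-labor choice: your Stage~1 goes beyond what the lemma asks and normalizes the base map to the identity on all of $(1+\delta)B^n$, invoking Theorem~\ref{thm:iso}, the Alexander Trick, and collar uniqueness, so that Stage~2 produces $\phi'$ with $\phi_1 = \mathrm{id}$. The paper instead stops after arranging $\phi(B^n \times \{0\}) = B^n \times \{0\}$ only as sets, extends the image bundle over $\R^n \times \{0\}$ via the bundle extension theorem, and lets the fiber-preserving isotopy from normal bundle uniqueness deliver the product form with a possibly nontrivial $\phi_1$; the straightening of $\phi_1$ is deliberately postponed to Lemma~\ref{lem:ident}, where exactly the ingredients of your Stage~1 reappear. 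Either arrangement works; just be aware that if you prove the stronger conclusion here, you should also streamline Lemma~\ref{lem:ident} to avoid redoing that work. One small caution: when you extract from normal bundle uniqueness an ambient isotopy ``covering the identity of $X$,'' that pointwise-fixing of the zero section is part of the standard uniqueness statement and worth flagging explicitly, since the provisional statement in the appendix only asserts identity near a compactum $C$.
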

  
  \begin{proof}  
The Annulus Theorem in dimension $n$  implies that the image $\phi(B^n \times \{0\}) \subset \inte(B^n \times \{0\})$ is isotopic to $B^n \times \{0\}$.  By the isotopy extension theorem, we can thus assume that $\phi(B^n \times \{0\})  =  B^n \times \{0\} $.  The condition that $\phi$ has the extension to a neighborhood in $\R^{n+2}$ then ensures that the  image $\phi(B^n \times B^2)$  forms a normal bundle over $B^n \times \{0\}$, which, by the extension theorem for bundles, is a sub-bundle of  a normal bundle to $\R^n \times \{0\} $ in $\R^{n+2}$.  By the uniqueness theorem for  normal bundles, there is a fiber preserving ambient isotopy carrying one bundle to the other.  Restricting to the image of $\phi$ gives the desired result.
  
\end{proof}

{\color{black} 
We now assume that $\phi \co   (B^n \times B^2, B^n \times \{0\}) \to  (B^n \times B^2, B^n \times \{0\})$ is an orientation preserving homeomorphism of pairs that preserves the product structure in the sense that $\phi$ can be decomposed as $\phi(x,y) = (\phi_1(x), \phi_2(x,y))$ for   functions $\phi_1$ and $\phi_2$.}
    
\begin{lemma} \label{lem:ident} The map $\phi$ is isotopic to the identity as a map of pairs.  \end{lemma}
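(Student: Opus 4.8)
The plan is to strip $\phi$ down in three stages, each reduction governed by Theorem~\ref{thm:iso} and the Alexander trick. Throughout, the intermediate maps will have the form $(x,y)\mapsto(x,g_x(y))$ with each $g_x$ an orientation preserving self-homeomorphism of $B^2$ fixing $0$, and such maps are automatically self-homeomorphisms of the pair, so I never need to check separately that the isotopies respect $B^n\times\{0\}$. First I would arrange $\phi_1=\mathrm{id}_{B^n}$. Restricting $\phi$ to $B^n\times\{0\}$ shows $\phi_1\co B^n\to B^n$ is an orientation preserving homeomorphism --- here one uses that $\phi$ preserves the orientations of both $B^n\times B^2$ and its submanifold --- and that $\phi_2(x,0)=0$, so each $g_x\defeq\phi_2(x,-)$ is an orientation preserving homeomorphism of $B^2$ fixing $0$, depending continuously on $x$. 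Since $\phi_1|_{\partial B^n}$ is an orientation preserving homeomorphism of $S^{n-1}$ it is isotopic to the identity by Theorem~\ref{thm:iso}; extending that isotopy across a collar of $\partial B^n$, and then applying the Alexander trick to the resulting homeomorphism of $B^n$ that is the identity on $\partial B^n$, shows $\phi_1$ is isotopic to $\mathrm{id}_{B^n}$. Crossing an isotopy of $B^n$ from $\mathrm{id}$ to $\phi_1^{-1}$ with $\mathrm{id}_{B^2}$ and composing with $\phi$ then reduces to the case $\phi(x,y)=(x,g_x(y))$ (after relabelling the family $g_x$).

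Next I would straighten the fibers radially. Put $f_x\defeq g_x|_{\partial B^2}\in\mathrm{Homeo}^+(S^1)$ and let $\bar f_x$ be its radial extension, $\bar f_x(r\theta)=rf_x(\theta)$. The classical Alexander isotopy, performed fiberwise with the parameter $x$ carried along --- interpolating by $g_{x,s}(y)=(1-s)\,g_x\!\big(y/(1-s)\big)$ on $\|y\|\le 1-s$ and by $\bar f_x(y)$ on $\|y\|\ge 1-s$ --- deforms $\phi$ through self-homeomorphisms of the pair to the map $(x,y)\mapsto(x,\bar f_x(y))$, which is now completely determined by the single continuous family $x\mapsto f_x$ of orientation preserving homeomorphisms of $S^1$.

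Finally I would kill this family over the contractible base $B^n$. By Theorem~\ref{thm:iso} applied to $S^1$, every orientation preserving homeomorphism of $S^1$ is isotopic to the identity, i.e.\ $\mathrm{Homeo}^+(S^1)$ is path connected; so, since $B^n$ is contractible, the family $x\mapsto f_x$ is homotopic --- through continuous families of orientation preserving homeomorphisms of $S^1$ --- to the constant family at $\mathrm{id}_{S^1}$: first contract $B^n$ to a point to reach the constant family at one homeomorphism $f_0$, then join $f_0$ to $\mathrm{id}_{S^1}$ by a path in $\mathrm{Homeo}^+(S^1)$. If $F\co B^n\times[0,1]\to\mathrm{Homeo}^+(S^1)$ realizes this, with $F(\cdot,0)=f$ and $F(\cdot,1)\equiv\mathrm{id}$, then $(x,y,t)\mapsto\big(x,\overline{F(x,t)}(y)\big)$ is an isotopy of pairs from $\phi$ to the identity, completing the proof.

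The step I expect to need the most care is this last one: one must produce not merely a homotopy of the family $\{f_x\}$ but a homotopy through families of \emph{homeomorphisms}, so that the construction assembles into a genuine isotopy of pairs --- and that is exactly where Theorem~\ref{thm:iso} (for $S^1$) is used, rather than some softer statement such as connectedness of $\mathrm{GL}_2^+(\R)$. The remaining bookkeeping --- the collar extension in the first step, the joint continuity of the parametrized Alexander isotopy in the second, and the continuity of the various evaluation maps throughout --- is routine, but, as the preceding discussion emphasizes, it must be carried out for the pair $(B^n\times B^2,\ B^n\times\{0\})$ and not for $B^n\times B^2$ alone.
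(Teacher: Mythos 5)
Your proof is correct and follows essentially the same route as the paper: first kill $\phi_1$ using Theorem~\ref{thm:iso} and the Alexander trick, then view the remaining $\phi_2$ as a family of self-homeomorphisms of $B^2$ over the contractible base $B^n$ and null-homotope it using path connectedness. The only difference is cosmetic: where the paper invokes the deformation retraction $\text{Homeo}_+(B^2)\simeq S^1$ from Appendix~\ref{app:isob} as a black box, you unpack its first stage (the fiberwise Alexander isotopy to cone maps) so that you can work in $\text{Homeo}_+(S^1)$ directly; this is exactly the retraction the appendix describes, so the two arguments are the same.
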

  
\begin{proof}
Consider $\phi_1 = \phi\big|_{B^n}$.  This is an orientation preserving homeomorphism of $B^n$.  By Theorem~\ref{thm:iso},  the restriction to the boundary $S^{n-1}$ is isotopic to the identity.  By the Alexander trick, this isotopy extends to $B^n$ .  The product structure permits us to extend this isotopy to $B^n \times B^2$, and thus    we can assume that $\phi_1$ is the identity and $\phi$ is of the form   
\[
\phi(x, y) = (x, \phi_2(x,y)).
\]
{\color{black} The function $\phi_2$  defines a map $\psi \co B^n \to \text{Homeo}_+(B^2)$; that is, $\psi(x)(y) = \phi_2(x,y)$.  As  we recall in  Appendix~\ref{app:isob}, there is a  deformation retraction giving $\text{Homeo}_+(B^2) \simeq S^1$. In particular,  $\text{Homeo}_+(B^2) $ is path connected.  Thus $\psi$ is homotopic to   the constant map for which $\psi(x)$ is the identity for all $x$.    This homotopy provides the desired isotopy of $\phi$,  completing the proof. } 
 \end{proof}
 

\section{A relative annulus theorem}\label{sec:relative}

An alternative proof to our main result,~Theorem~\ref{thm:map}, could be based on a relative form of the Annulus Theorem.   We outline a proof.
    
 \begin{theorem}\label{thm:extra} Suppose that $f \co (B^{n+2}, B^{n}) \to \inte\, ( B^{n+2}, B^{n}) $ is an embedding and $f(\partial (B^{n+2}, B^{n}) )$ is locally flat. Then there is a homeomorphism 
\[h\co  \big( (S^{n+1}, S^{n-1}) \times [0,1] \big) \to \big(   (B^{n+2}, B^{n}) \setminus {\rm \inte\, } ( f( (B^{n+2}, B^{n})) \big).
\]
\end{theorem}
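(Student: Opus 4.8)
The plan is to reduce $f$, after composing with self-homeomorphisms of the pair $(B^{n+2},B^n)$, to a concentrically scaled-down copy of the identity inclusion, and then to read off $h$ in radial coordinates. The two reductions are the relative analogues of Lemmas~\ref{lem:unique} and~\ref{lem:ident}.

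First I would note that the local-flatness hypothesis supplies the regularity the argument requires: $f(\partial B^{n+2})=f(S^{n+1})$ is a locally flat codimension-one sphere, hence bicollared, so $f$ may be extended to an embedding of a neighborhood of $B^{n+2}$ in $\R^{n+2}$ (the neighborhood-extension hypothesis of Lemma~\ref{lem:unique}), and $f(B^n)$ is a locally flat codimension-two submanifold of $B^{n+2}$. Now write $(B^{n+2},B^n)=(B^n\times B^2,\,B^n\times\{0\})$ and standardize $f$ in two steps. Since $f(B^n)$ is a locally flat $n$-ball in the interior of the coordinate ball $B^n\times\{0\}$, the Annulus Theorem in dimension $n$ (together with the Alexander trick) furnishes an ambient isotopy of $B^n$, supported in $\inte B^n$, carrying $f(B^n)$ onto $\tfrac12 B^n$; performing it in the first factor produces a self-homeomorphism of the pair after which $f(B^n)=\tfrac12 B^n\times\{0\}$. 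The image $f(B^{n+2})$ is then an $(n+2)$-ball with core $\tfrac12 B^n\times\{0\}$, and, exactly as in the proof of Lemma~\ref{lem:unique}, the neighborhood extension makes $f(B^{n+2})$ a normal disk bundle over $\tfrac12 B^n\times\{0\}$ realized as a sub-bundle of a normal bundle to $\tfrac12 B^n\times\{0\}$ in $B^{n+2}$. By the existence and uniqueness of normal bundles in codimension two, a fibre-preserving ambient isotopy fixing the core carries $f(B^{n+2})$ onto the standard tube $\tfrac12 B^n\times\tfrac12 B^2$. After these moves we may assume that
\[
\bigl(f(B^{n+2}),\,f(B^n)\bigr)=\bigl(\tfrac12 B^n\times\tfrac12 B^2,\ \tfrac12 B^n\times\{0\}\bigr).
\]

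The remaining step is elementary. Regard $K\defeq B^n\times B^2$ as a convex body in $\R^{n+2}$ with the origin in its interior, so that $\tfrac12 K=\tfrac12 B^n\times\tfrac12 B^2$; then the radial scaling $(p,t)\mapsto tp$, for $p\in\partial K$ and $t\in[\tfrac12,1]$, is a homeomorphism $\partial K\times[\tfrac12,1]\to K\setminus\inte(\tfrac12 K)$ which carries $\bigl(\partial K\cap(\R^n\times\{0\})\bigr)\times[\tfrac12,1]$ onto $(B^n\times\{0\})\setminus\inte(\tfrac12 B^n\times\{0\})$. Here $\partial K$ is a topological $S^{n+1}$ and $\partial K\cap(\R^n\times\{0\})=S^{n-1}\times\{0\}$ is the standard unknotted $S^{n-1}$ in it; indeed $\bigl(\partial K,\,\partial K\cap(\R^n\times\{0\})\bigr)=\partial(B^{n+2},B^n)=(S^{n+1},S^{n-1})$. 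Rescaling $[\tfrac12,1]$ to $[0,1]$ and undoing the self-homeomorphisms of $(B^{n+2},B^n)$ used to standardize $f$ now produces the required $h$.

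The crux is the second reduction --- recognizing $f(B^{n+2})$ as a normal-bundle neighborhood of its core and straightening it rel core --- which is precisely the content of the existence and uniqueness of codimension-two normal bundles. It is here that the dimension hypotheses enter: those bundle results rest on the $s$-cobordism theorem with fundamental group $\Z$, so, as for Theorem~\ref{thm:main1}, the cases $n=2,4,5$ cannot be handled without Freedman--Quinn, while $n=1$ is classical. Everything else --- the Annulus Theorem in dimension $n$, which is also used in the first reduction, the bicollar producing the neighborhood extension, and the radial-coordinate computation --- is routine. Conversely, applying Theorem~\ref{thm:extra} to the ball-pair that compares two connected sums and using the resulting product collar to slide one splitting sphere-pair onto the other gives the alternative proof of Theorem~\ref{thm:main1} promised at the start of this section.
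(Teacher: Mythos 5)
Your proof takes a genuinely different route from the paper, and there is a gap in the central step. The paper does not attempt to straighten $f$ at all; instead it constructs the product structure on $\big((B^{n+2},B^n)\setminus\inte f(B^{n+2},B^n)\big)$ directly, by first building a normal bundle $E$ over the annular codimension-two piece $B^n\setminus\inte f(B^n)$, using $E$ to embed a shell $Z\subset S^{n+1}\times[0,1]$, and then identifying the closure of the remaining piece with $S^1\times B^{n+1}$ via a homotopy computation together with Stallings' unknotting theorem (Freedman--Quinn for $n=2$) and the topological Poincar\'e conjecture. Your plan is instead to normalize $f$ by self-homeomorphisms of the pair $(B^{n+2},B^n)$ and then read off the annulus in radial coordinates.

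The gap is in the second normalization. You invoke, ``exactly as in the proof of Lemma~\ref{lem:unique},'' a fibre-preserving ambient isotopy carrying the tube $f(B^{n+2})$ onto $\tfrac12 B^n\times\tfrac12 B^2$, and then treat the time-one map as a self-homeomorphism of the pair $(B^{n+2},B^n)$. But the isotopy produced by Lemma~\ref{lem:unique} and by the existence/uniqueness theorem of Appendix~\ref{app:normal} is an ambient isotopy of $(\R^{n+2},\R^n)$ (or of $(\R^n\times\R^2,\R^n\times\{0\})$); nothing in those statements asserts that it is supported in $\inte B^{n+2}$, and being fibre-preserving over $\R^n$ only guarantees that $\R^n$ is preserved, not that $B^{n+2}$ is. The time-one map therefore carries $(B^{n+2},B^n)$ to some other ball pair $(\Psi_1(B^{n+2}),B^n)$, and your target region becomes $\Psi_1(B^{n+2})\setminus\inte(\tfrac12 B^n\times\tfrac12 B^2)$, where the outer boundary is no longer round --- you have simply transferred the original problem to the outer sphere pair rather than solved it. To make your strategy work you would need a strengthened uniqueness statement in which the straightening isotopy is supported in a prescribed neighborhood of the two normal bundles (hence inside $\inte B^{n+2}$) and moreover respects the whole of $B^n$, not merely the core; neither strengthening is stated in the appendix, and you do not flag the need for it. This is, I believe, exactly the obstruction that leads the paper to its more elaborate construction; note also that your argument nowhere uses the Stallings/Freedman--Quinn unknotting theorem or the Poincar\'e conjecture, both of which the paper's proof leans on, which should itself be a warning sign that something essential is being bypassed. (The first normalization, via the Annulus Theorem in the $B^n$ factor crossed with $\mathrm{id}_{B^2}$, is fine, since an isotopy supported in $\inte B^n$ does give a self-homeomorphism of the pair.)
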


\begin{proof}[Proof outline]   We have the  normal bundle $E_1$ to $\big(\R^{n} \setminus \inte (B^n)  \big) \subset \R^{n+2}$.  There is also the normal bundle $E_2 = f(E_2')$, where $E_2'$ is the normal bundle to  $  B^n     \subset \R^{n+2}$.   

With care, these bundles  can be chosen to restrict to give normal bundles to $S^{n-1} $ and $f(S^{n-1})$ in $S^{n+1}$ and $f(S^{n+1})$, respectively.  The union of these can be extended to   form a normal bundle to $\R^n \subset \R^{n+2}$.  Let the restriction to $B^{n} \setminus \inte(f(B^n))$ be denoted $E$.  By decreasing the radius, we can assume that $E \subset B^{n+2}  \setminus \inte(f(B^{n+2}))$.  

{\color{black} 
Let $T$ be a tubular neighborhood of $S^{n-1} \subset S^{n+1}$.  Define 

\[
Z =    \big(  S^{n+1} \times \{0\} \big) \cup \big(T \times [0,1] \big) \cup  \big(  S^{n+1} \times \{1\} \big) \subset \big(  S^{n+1}  \times [0,1] \big). 
\]
We can now use the bundle $E$ to build an embedding $\Phi \co Z \to  (B^{n+2}, B^{n}) \setminus \inte( f(B^{n+2}, B^{n})) $.   The map $\Phi$ is  defined on each of the three sets whose union defines $Z$: on $  \big(  S^{n+1} \times \{0\} \big) $ we use the identity map on the first component;  on  $  \big(  S^{n+1} \times \{1\} \big) $ it is defined using $f$; on $Z$ it is defined using a trivialization of $E$.  One must check that these functions can be adjusted to agree on the overlaps.  The obstruction to finding an isotopy of the overlap maps so that they do agree is determined by an element in   $\pi_{n-1}(\text{Homeo}_+(B^2))$. For $n\ge 3$ this group is 0; for $n = 2$ there is an integer-valued obstruction and a   homological argument must be applied to confirm its vanishing in our situation.  (See Appendix~\ref{app:isob} for a discussion of the homotopy equivalence $\text{Homeo}_+(B^2) \sim S^1$.)
}

The closure of the complement of $Z$ in $    S^{n+1}  \times [0,1]  $ is homeomorphic to $S^1 \times B^{n+1}$.  Thus, to complete the proof by extending $\Phi$, it must be shown that the closure of $ \big(  S^{n+1}  \times [0,1] \big) \setminus \Phi(Z)$ is also homeomorphic to $S^1 \times B^{n+1}$.   

The fact that $Z$ is built by removing a ball pair defined by $f$ from a standard ball pair permits one to use a van Kampen's Theorem argument to show $\pi_1( \big(  S^{n+1}  \times [0,1] \big) \setminus \Phi(Z)) \cong \Z$.  Then we see that  the universal cover of $ \big(  S^{n+1}  \times [0,1] \big) \setminus \Phi(Z)$ embeds in $\R \times B^{n+1}$ and a Mayer-Vietoris argument implies the universal cover is acyclic.  Finally, the Hurewicz Theorem implies that $ \big(  S^{n+1}  \times [0,1] \big) \setminus \Phi(Z)$ is a homotopy circle.

The boundary of $ \big(  S^{n+1}  \times [0,1] \big) \setminus \Phi(Z)$ is homeomorphic to $S^1 \times S^n$.  We can attach a copy of $B^2 \times S^{n}$ to $ \big(  S^{n+1}  \times [0,1] \big) \setminus \Phi(Z)$ to build a homotopy sphere which,  by the  truth of the  Poincar\'e Conjecture,  is homeomorphic to $S^{n+2}$.  Thus, $ \big(  S^{n+1}  \times [0,1] \big) \setminus \Phi(Z)$ is the complement of a knot in $S^{n+2}$ and $ \big(  S^{n+1}  \times [0,1] \big) \setminus \Phi(Z)$ has the  homotopy type of $S^1$.  For dimensions $n\ge 3$, Stallings~\cite{MR149458} proved that such knots are standard.  For $n = 2$, this was proved by Freedman-Quinn~\cite{MR1201584}.  Thus, we have   $ \big(  S^{n+1}  \times [0,1] \big) \setminus \Phi(Z) \cong S^1 \times B^{n+1}$.  

It remains to show that $\Phi$ extends as desired.  This is quickly reduced to a question about extending a map $\Psi$ from $S^1 \times S^{n}$ to $S^1 \times B^{n+1}$.  The fact that the $n$--knot constructed in the last paragraph is standard implies not only that the complement is homeomorphic to $S^1 \times B^{n+1}$, but that the homeomorphism preserves the normal bundle structure of the knot.  Thus, we can assume that $\Psi$ is  map of the form $\Psi(t, x) = (t, \psi(t, x))$ for some $\psi$.  The Alexander trick can now be applied to complete the proof. 

\end{proof}    

\appendix

\section{Background} 

\subsection{Alexander Trick}\label{app:alex}  The  expression   ``Alexander Trick''  refers to the theorem that states that every homeomorphism of $S^{n-1}$ extends to $B^n$  and that 
two homeomorphisms of $B^n$ that agree on $S^{n-1}$ are isotopic.  The    isotopy that is constructed is  constant on $S^{n-1}$.  In the proof, a simple  coning construction extends any homeomorphism from $S^{n-1}$ to $B^n$.  The following   lemma provides the necessary tool to conclude the  proof.  The construction    is the ``trick.''
 
\begin{theorem}  If $f \co B^n \to B^n$ is a homeomorphism that restricts to the identity on $S^{n-1}$, then there is an isotopy $F\co B^n \times [0,1] \to B^n$ from $f$ to the identify such that for all $t$ the map $F(\cdot, t)$ is the identity on $S^{n-1}$. 
\end{theorem}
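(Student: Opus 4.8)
The plan is to write down the classical radial coning isotopy explicitly and then verify that it has all the required properties. Define $F\co B^n \times [0,1] \to B^n$ by
\[
F(x,t) = \begin{cases} t\, f(x/t), & 0 \le \|x\| \le t,\\ x, & t \le \|x\| \le 1, \end{cases}
\]
for $t \in (0,1]$, together with $F(x,0) = x$. Heuristically, at parameter $t$ one runs a scaled-down copy of $f$ on the concentric ball of radius $t$ and fixes everything outside it; as $t \to 0$ this scaled copy collapses to the identity. By construction $F(\cdot,1) = f$ and $F(\cdot,0) = \mathrm{id}_{B^n}$, the required endpoints, and each $F(\cdot,t)$ lands in $B^n$ since $\|t f(x/t)\| = t\|f(x/t)\| \le t$.

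First I would check that $F$ is well defined and continuous. The only place the two cases overlap is the cone $\|x\| = t$, where $x/t \in S^{n-1}$, so $f(x/t) = x/t$ and hence $t f(x/t) = x$: the two formulas agree there, and $F$ is continuous on $B^n \times (0,1]$. Continuity along $B^n \times \{0\}$ needs a short estimate: for $x \ne 0$ and $(x',t')$ near $(x,0)$ one has $t' < \|x'\|$, whence $F(x',t') = x' \to x$; near $(0,0)$ one uses again $\|t f(x/t)\| \le t$, since $f$ maps into $B^n$, so $F(x',t') \to 0$. Next, each $F(\cdot,t)$ is a homeomorphism: for $t>0$ its inverse is $y \mapsto t f^{-1}(y/t)$ on $\|y\|\le t$ and $y \mapsto y$ on $\|y\|\ge t$, which is continuous by the same matching computation, using that $f^{-1}$ also restricts to the identity on $S^{n-1}$; and $F(\cdot,0)$ is the identity. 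Rerunning the boundary estimate with $f^{-1}$ in place of $f$ shows $(y,t)\mapsto (F(\cdot,t)^{-1}(y),t)$ is jointly continuous, so $(x,t)\mapsto (F(x,t),t)$ is in fact a homeomorphism of $B^n \times [0,1]$; thus $F$ is an isotopy in the strong, level-preserving sense, not merely a path of homeomorphisms.

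For the final requirement, if $\|x\| = 1$ then $\|x\| \ge t$ for every $t \in [0,1]$, so $F(x,t) = x$ (at $t = 1$ this is the overlap case, where the first formula also gives $f(x) = x$ because $f$ is the identity on $S^{n-1}$). Hence $F(\cdot,t)$ restricts to the identity on $S^{n-1}$ for all $t$, as desired.

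I expect the only genuine subtlety — mild, but the one point where care is actually needed — to be continuity at $t = 0$ and at the origin, where the rescaling $x \mapsto x/t$ degenerates; this is handled precisely by using that $f$ takes values in $B^n$, so the rescaled maps stay uniformly small. Everything else is routine bookkeeping. One could instead observe that $B^n \setminus \{0\} \cong S^{n-1}\times(0,1]$ and present $F$ as the cone on the constant boundary isotopy $f|_{S^{n-1}} = \mathrm{id}$, but writing the formula down directly is cleanest.
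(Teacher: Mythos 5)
Your formula is exactly the one the paper uses, and your verification of continuity, invertibility, and boundary behavior fills in details the paper leaves implicit. Same approach, correctly carried out.
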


\begin{proof} The isotopy  $F$ can be written explicitly: \[
F( x, t  ) =
\begin{cases}
tf(x/t),  &\text{if } 0 \le  \big| x  \big| < t\\
x,  &\text{if }  t \le  \big| x \big|  \le  1.

\end{cases}
\]

\end{proof}

\begin{corollary}[Alexander Trick]  If $h_1$ and $h_2$ are homeomorphisms of $B^n$ that agree on $S^{n-1}$, then $h_1$ and $h_2$ are isotopic via an isotopy that fixes $S^{n-1}$.
\end{corollary}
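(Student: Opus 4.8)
The plan is to reduce the statement about two arbitrary homeomorphisms agreeing on the boundary to the special case already handled by the theorem, namely a homeomorphism of $B^n$ that is the identity on $S^{n-1}$. First I would set $f = h_2^{-1} \circ h_1 \co B^n \to B^n$. This is a homeomorphism of $B^n$, and because $h_1$ and $h_2$ agree on $S^{n-1}$ we have $f|_{S^{n-1}} = (h_2|_{S^{n-1}})^{-1} \circ (h_1|_{S^{n-1}}) = \mathrm{id}_{S^{n-1}}$. So $f$ satisfies the hypothesis of the preceding theorem.

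Next I would invoke that theorem to obtain an isotopy $F \co B^n \times [0,1] \to B^n$ with $F(\cdot, 0) = f$, $F(\cdot, 1) = \mathrm{id}_{B^n}$, and $F(\cdot, t)|_{S^{n-1}} = \mathrm{id}_{S^{n-1}}$ for every $t$. I then compose on the left with $h_2$, defining $G(x,t) = h_2\big(F(x,t)\big)$. Since $h_2$ is a fixed homeomorphism, each slice $G(\cdot, t) = h_2 \circ F(\cdot, t)$ is a homeomorphism of $B^n$, and $G$ is continuous in $(x,t)$ as a composition of continuous maps; thus $G$ is an isotopy. At the endpoints, $G(\cdot, 0) = h_2 \circ f = h_2 \circ h_2^{-1} \circ h_1 = h_1$ and $G(\cdot, 1) = h_2 \circ \mathrm{id} = h_2$. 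On the boundary, for every $t$ and every $x \in S^{n-1}$ we have $G(x,t) = h_2(F(x,t)) = h_2(x) = h_1(x)$, using that $h_1$ and $h_2$ agree on $S^{n-1}$; hence the isotopy $G$ is constant on $S^{n-1}$, taking the common value $h_1|_{S^{n-1}}$. This is exactly the asserted isotopy from $h_1$ to $h_2$ fixing $S^{n-1}$.

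There is essentially no obstacle here: the only points requiring a word are that precomposition has been replaced by postcomposition by a fixed homeomorphism, which manifestly preserves the isotopy property, and that the boundary values are computed using the hypothesis $h_1|_{S^{n-1}} = h_2|_{S^{n-1}}$. If one prefers to avoid inverting $h_2$, the symmetric choice $f = h_1^{-1} \circ h_2$ together with postcomposition by $h_1$ works identically. Either way the corollary follows immediately from the explicit isotopy $F$ of the theorem.
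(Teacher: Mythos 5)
Your proof is correct and takes essentially the same approach as the paper: both set $f = h_2^{-1}\circ h_1$, apply the preceding theorem to obtain the boundary-fixing isotopy $F$ from $f$ to the identity, and then postcompose with $h_2$ to produce the desired isotopy from $h_1$ to $h_2$.
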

\begin{proof}  Let $F\co  B^n \times [0,1] \to B^n$ be an isotopy from $h_2^{-1} \circ h_1$ to the identity that fixes $S^{n-1}$.   Let $H$ be the isotopy defined by $H(x,t) = h_2(F(x,t))$.  Then $H(x,0) = h_1(x)$ and $H(x,1) = h_2(x)$, as desired.

\end{proof}

\subsection{Homeomorphisms of $B^2$}\label{app:isob} 

There are two natural inclusions:  $S^1 \subset \text{Homeo}_+(S^1)$   and $\text{Homeo}_+(S^1) \subset  \text{Homeo}_+(B^2) $. The first is given by complex multiplication by an element in the unit circle and the second is given by the coning construction.  The following  basic theorem was  
first proved by Kneser~\cite{MR1544816}; see also~\cite{MR321124} for finer details and a discussion at the stack exchange~\cite{stack:320741}.  We present a quick summary.

\begin{theorem} There is a strong deformation retraction $\text{Homeo}_+(B^2)  \to S^1$.  
\end{theorem}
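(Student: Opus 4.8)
The plan is to produce the strong deformation retraction as the concatenation of two strong deformation retractions $\text{Homeo}_+(B^2)\searrow\text{Homeo}_+(S^1)\searrow S^1$, the first obtained by coning the boundary value and applying the Alexander trick, the second by straightening circle homeomorphisms in the universal cover $\R\to S^1$. Throughout, both homeomorphism groups carry the compact--open topology, with respect to which they are topological groups; here one uses that $B^2$ and $S^1$ are compact, so that uniform convergence of a sequence of homeomorphisms of a compact metric space forces uniform convergence of the inverses, and hence that composition and inversion are continuous.

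\emph{Stage 1.} For $f\in\text{Homeo}_+(B^2)$ let $g_f$ be the cone on $f|_{S^1}$, that is, $g_f(x)=|x|\,f(x/|x|)$ for $x\ne 0$ and $g_f(0)=0$; this is again an orientation preserving homeomorphism of $B^2$, and $k_f:=g_f^{-1}\circ f$ restricts to the identity on $S^1$. The explicit Alexander isotopy of Appendix~\ref{app:alex} gives, jointly continuously in $(f,t)$, an isotopy from $k_f$ to $\mathrm{id}_{B^2}$ that is the identity on $S^1$ at every time; composing it with $g_f$ yields a continuous homotopy $H_1$ from the identity of $\text{Homeo}_+(B^2)$ to the map $f\mapsto g_f$. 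Its image is the set $\mathcal C$ of cones, and if $f\in\mathcal C$ then $g_f=f$ and $k_f=\mathrm{id}$, so $H_1(f,\cdot)$ is the constant path at $f$. Thus $H_1$ is a strong deformation retraction onto $\mathcal C$, and the mutually inverse continuous maps $h\mapsto g_h$ and $f\mapsto f|_{S^1}$ identify $\mathcal C$ with $\text{Homeo}_+(S^1)$, carrying the rotation subgroup of $\text{Homeo}_+(B^2)$ onto the rotation subgroup of $\text{Homeo}_+(S^1)$.

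\emph{Stage 2.} Lift $h\in\text{Homeo}_+(S^1)$ to an increasing homeomorphism $\tilde h\co\R\to\R$ with $\tilde h(x+1)=\tilde h(x)+1$, and set $\tilde h_t(x)=(1-t)\tilde h(x)+t\,(x+\tilde h(0))$. Each $\tilde h_t$ is again an increasing homeomorphism commuting with $x\mapsto x+1$, hence descends to $h_t\in\text{Homeo}_+(S^1)$; one has $h_0=h$, $h_1$ is the rotation carrying the basepoint $*\in S^1$ to $h(*)$, and $h_t=h$ for all $t$ when $h$ is already a rotation. Replacing $\tilde h$ by $\tilde h+n$ changes $\tilde h_t$ only by $+n$, so $H_2(h,t):=h_t$ is well-defined and continuous despite the absence of a global continuous choice of lift. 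Hence $H_2$ is a strong deformation retraction of $\text{Homeo}_+(S^1)$ onto the rotation subgroup $S^1$.

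\emph{Stage 3 and the main obstacle.} Concatenating $H_1$ on $[0,\tfrac12]$ with the image under $h\mapsto g_h$ of $H_2$ on $[\tfrac12,1]$ gives a homotopy from the identity of $\text{Homeo}_+(B^2)$ to the map sending $f$ to the rotation of $B^2$ by $f(*)$; by the closing sentences of Stages 1 and 2 it is constant on the rotation subgroup $SO(2)\cong S^1$, so it is the desired strong deformation retraction. The step needing the most care is not conceptual but is the topological bookkeeping: verifying that coning, composition, and inversion are continuous for the compact--open topology via the uniform-convergence-of-inverses fact above, and that the formula in Stage 2 descends continuously even though $h\mapsto\tilde h$ admits no continuous global section.
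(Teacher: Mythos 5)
Your proposal is correct and takes essentially the same two-stage approach as the paper: coning combined with the Alexander isotopy to retract $\text{Homeo}_+(B^2)$ onto $\text{Homeo}_+(S^1)$, then the linear homotopy $(1-t)\tilde h(x)+tx+t\tilde h(0)$ in the universal cover to retract onto the rotation circle. You spell out more of the continuity bookkeeping (joint continuity of the Alexander isotopy, well-definedness under the ambiguity $\tilde h\mapsto\tilde h+n$) than the paper does, but the underlying construction is identical.
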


\begin{proof} Given    $\phi \in \text{Homeo}_+(B^2)$, the Alexander Trick provides a canonical isotopy that carries it to a new homeomorphism,  the cone on the map $\phi\big|_{S^1}$.  This yields a  strong deformation retraction from  $\text{Homeo}_+(B^2)$ to  $\text{Homeo}_+(S^1)$.

  Let $h \in \text{Homeo}_+(S^1)$.  The next step is to  define an isotopy $H(x, t) \co S^1 \times [0,1] \to S^1$ from $h$ to the map given by multiplication by $h(1)$.  This is 
achieved by first lifting $h$ to a strictly increasing function of period 1 on $\R$, denoted $\widetilde{h}$ (a homeomorphism),  and then applying the isotopy $\widetilde{H} \co \R \times [0,1] \to \R$ given by 
\[
\widetilde{H}(x,t) = (1-t)\widetilde{h}(x) + tx + t\widetilde{h}(0).  
\]


\end{proof}

\subsection{The Annulus Theorem}\label{app:annul} This theorem  states that for all $n\ge 1$, if $f, g \co S^{n-1}  \to \R^n $ are disjoint  locally flat embeddings, then the compact region bounded by $f(S^{n-1})$ and $g(S^{n-1})$ is homeomorphic to $S^{n-1} \times I$.  This was proved by Kirby~\cite{MR242165} for $n\ge 5$.  In dimension four it was   proved by Quinn~\cite{MR679069}.  See Edwards~\cite{MR780581} for a survey.  

\subsection{Stable Homeomorphism Theorem}\label{app:stable}  This states that every orientation preserving  homeomorphism of $\R^n$ is the composition of homeomorphisms, each one of which is the identity map on some open set.  

An argument   similar to the one used in  the proof of Theorem~\ref{thm:iso}  shows that the truth of the Annulus Theorem for all $k \le n$ implies the Stable Homeomorphism Theorem in dimension $n$; it is also the case that the Stable Homeomorphism Theorem in dimension $n$ implies the Annulus Theorem in dimension $n$.    Such relationships were first identified by Brown and Gluck in a series of three papers~\cite{MR158383, MR158384, MR158385}.     

The Stable Homeomorphism Theorem easily implies that orientation preserving homeomorphisms of $S^n$ are stable.   

For $n\ge 5$, the Stable Homeomorphism Theorem was proved by  Kirby~\cite{MR242165}.  Quinn's proof of the Annulus Theorem for $n = 4$ yields a proof of the Stable Homeomorphism Theorem  in that dimension. 

\subsection{Isotopy Extension Theorem}  This theorem states that if  $C \subset M^n$ is a compact subset and $F \co U \times [0,1]  \to M^n$ is an  isotopy of the inclusion map on some open neighborhood $U$ of $C$,  then there is an  ambient isotopy $H \co M \times [0,1] \to M$ that agrees with $F$ for some open set  $V$ with $C \subset V \subset U$.  This  was proved by Edwards-Kirby~\cite{MR283802} in all dimensions.    
    
 \subsection{Existence and uniqueness of normal bundles}\label{app:normal}  
Let $f \co M  \to N $ be an embedding of manifolds.  Roughly stated, a normal bundle to $f(M)$ consists of an abstract vector bundle $E$ over $M$ and an embedding $h$ of the total space of $E$ into $N$ that agrees with $f$ on the zero-section.  The existence and uniqueness theorem for normal bundles (in codimension 2) has a statement of roughly the following form.

\begin{theorem}[Provisional]  Suppose that $f \co M^k\to N^{k+2}$ represents a codimension two locally flat submanifold, that $C \subset M$ is compact, and that  $U$ is a neighborhood of $C$ in $M$.  Furthermore, assume that $E$ is an embedded normal bundle to $f(U)$.  Then there is a normal bundle $E'$ to $f(M)$ agreeing with $E$ on some neighborhood  $U'$ of $C$, where $U' \subset U$.  Given two such extensions, $E'$ and $E''$, there is an ambient isotopy of $N$ carrying $E'$ to $E''$ that is the identity on some neighborhood $V$ of $C$, with $V \subset U' \cap U''$.
\end{theorem}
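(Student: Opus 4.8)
The plan is to deduce both assertions --- the extension $E'$ and the relative uniqueness up to ambient isotopy --- from a single local statement about codimension-two normal bundle germs, via the standard handle-by-handle obstruction argument. This is carried out in~\cite{MR0645390} in the high-dimensional range, and the low-dimensional cases require the extensions of that technology due to Quinn and to Freedman--Quinn.

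First I would isolate the local input. Fix $p\in f(M)$. Local flatness gives a chart identifying a neighborhood of $p$ with $(\R^{k+2},\R^k) = (\R^k\times\R^2,\,\R^k\times\{0\})$, and projection to the first factor is a (trivial) normal bundle near $p$; so local normal bundles always exist, and the content of the theorem is that they patch. Whether they patch is governed by the homotopy type of the space $\mathcal{N}$ of normal bundle germs of $\R^k\subset\R^{k+2}$ at the origin: if $\mathcal{N}$ is $k$--connected, then obstruction theory over a handle decomposition of $M$ --- and the same theory over $M\times[0,1]$ --- yields, respectively, the extension and the uniqueness statements. In codimension two the required connectivity of $\mathcal{N}$ is precisely a form of the $s$--cobordism theorem with fundamental group $\Z$, the $\Z$ being carried by the meridian of the submanifold; it holds for ambient dimension $\ge 6$ by Kirby--Siebenmann~\cite[Essay III]{MR0645390}, for ambient dimension $5$ --- the ``cobordisms of dimension five'' --- by Quinn~\cite{MR679069}, and for ambient dimension $4$, which is exactly the case $M$ a surface, by the four-manifold techniques of Freedman--Quinn~\cite{MR1201584}; for ambient dimension $\le 3$ everything is smoothable and classical. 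This connectivity statement is the step I expect to be the main obstacle: everything else is formal obstruction theory, and it is here, not in any combinatorics, that the low-dimensional cases escape the methods of~\cite{MR0645390} alone.

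Granting the connectivity of $\mathcal{N}$, the extension statement goes as follows. Shrink $U$ to a neighborhood $U'$ of $C$ and choose a handle decomposition of $M$ built on $U'$ --- locally finite if $M$ is non-compact, and using the smooth theory for $\dim M\le 3$, Quinn's results on handle decompositions of $4$--manifolds for $\dim M = 4$, and~\cite{MR0645390} for $\dim M\ge 5$. Starting from the normal bundle that $E$ already provides over $f(U')$, extend over the handles one at a time. Over a handle $D^i\times D^{k-i}$ the pair $(N,f(M))$ is locally standard, a normal bundle there is trivial since the base is contractible, and the obstruction to extending over the handle the trivialized germ already prescribed on the attaching region $\partial D^i\times D^{k-i}$ lies in $\pi_{i-1}(\mathcal{N})$, hence vanishes. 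Iterating over all handles produces a normal bundle $E'$ to $f(M)$ agreeing with $E$ over $U'\subset U$.

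For the relative uniqueness I would run the identical induction over $N\times[0,1]$. Given extensions $E'$ and $E''$, view them as normal bundles to $f(M)\times\{0\}$ and $f(M)\times\{1\}$ and the pullback of $E$ as a normal bundle over $(U'\cap U'')\times[0,1]$; the extension argument then produces a normal bundle over all of $f(M)\times[0,1]$ restricting to these three, i.e.\ a concordance from $E'$ to $E''$ that equals the pullback of $E$ near $C$. Performing the handle induction so as to respect the $[0,1]$--coordinate keeps this concordance sliced, and the Isotopy Extension Theorem of Edwards--Kirby~\cite{MR283802} converts it into an ambient isotopy of $N$ carrying $E'$ to $E''$ that is the identity on a neighborhood $V$ of $C$ with $V\subset U'\cap U''$, exactly as the theorem asserts.
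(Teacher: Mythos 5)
The paper does not prove this theorem. It is labeled ``Provisional'' and lives in a survey appendix whose point is that the statement, as written, is not quite well posed: the appendix immediately notes that the Alexander Horned Sphere yields a ``tubular neighborhood'' that no ambient isotopy standardizes, so one must replace the naive notion of normal bundle by Freedman--Quinn's \emph{extendable normal bundles}~\cite[Section 9.3]{MR1201584} or Kirby--Siebenmann's microbundles~\cite{MR0400237}, using Kister~\cite{MR180986} and Kneser's $\text{Homeo}_+(B^2)\simeq S^1$ to recover a disk bundle. Your proposal never fixes the definition, yet with the naive one the uniqueness clause is false, so the definitional repair --- which is the appendix's actual content --- has to come first. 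There is no paper proof to compare against; the appendix is pointing the reader to where the precise statements and proofs live.

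Read on its own terms, your sketch also has a concrete gap. You build $E'$ by handle induction over $M$, citing ``Quinn's results on handle decompositions of $4$-manifolds'' for $\dim M = 4$. But a handle decomposition of a topological $4$-manifold forces a smooth structure (Moise smooths the $3$-dimensional attaching maps), and Freedman's $E_8$-manifold has none, hence has no handle decomposition; Quinn proved handle decompositions for $5$-manifolds and smoothability of \emph{noncompact} $4$-manifolds, not the statement you need for a closed $4$-dimensional $M$. So the extension argument breaks exactly where you are leaning hardest on~\cite{MR1201584}, and the published proofs do not proceed by a handle induction over the submanifold in that case. Two other steps you present as formal are also where the real content lives: the $k$-connectivity of the ``space of normal bundle germs $\mathcal{N}$,'' which you say ``is precisely'' the $s$-cobordism theorem with $\pi_1\cong\Z$, is closer to a restatement of the theorem than a lemma you can invoke; and converting a sliced concordance into an ambient isotopy via Edwards--Kirby~\cite{MR283802} needs an argument, since concordance of embeddings does not imply isotopy in general and keeping the induction sliced over $[0,1]$ is exactly the nontrivial point.
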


To attain such a result, the definition of normal bundle must be made more precise:  For instance,  the Alexander Horned Sphere provides a tubular neighborhood of the origin in $\R^3$ with the property that no ambient isotopy carries it to a standard embedded normal bundle.

In Freedman-Quinn~\cite[Section 9.3]{MR1201584} there is a restriction to {\it extendable normal bundles}.  Let $E$ be a normal bundle with an embedding $h$ of its total space into $N$.  Then $E$ is called an {\it extendable normal bundle} if it has the following property:  for any radial embedding $g$ of $E$ into an open convex disk bundle in a vector bundle $F$ over $M$, the map $h\circ g^{-1}$ extends from $g(E)$ to $F$.  

In Kirby-Siebenmann~\cite{MR0400237} the theory is presented in terms of {\it microbundles}, which are open sets $U \subset N$ containing $f(M)$ with strong retractions $U \to f(M)$.  (For details on microbundles, see Milnor's original paper~\cite{MR161346}.)  As described in~\cite{MR0400237},  the results for microbundles in codimension two can be applied in the setting of vector bundles. This depends on two  results.   First, Kister~\cite{MR180986} proved that a microbundle  always contains a fiber bundle with fibers $\R^k$ and structure group $TOP(k)$,  the group of homeomorphisms of $\R^k$ that fix the origin.   Second,  in dimension two, Kneser proved that $TOP(2)$ deformation retracts to $S^1 \cong O(2)$; see~\cite{MR321124,MR1544816}, or the proof presented in Appendex~\ref{app:isob}


\section{Cappell and Shaneson's proof}\label{app:cs} Cappell and Shaneson's proof that connected sums of knots  $\Sigma^n \subset S^{n+2}$ depends on a lemma, the proof of which consists of four sentences.   Slightly changing the notation, it states the following.

\begin{lemma} Let $\Sigma^n$ be a locally flat oriented knot in $S^{n+2}$.  Let $f_i\co (B^{n+2}, B^n) \to (S^{n+2}, \Sigma^n)$ be orientation preserving embeddings for $i = 1, 2$.  Then for $n = 3$ and $n\ge 5$ there is an orientation preserving homeomorphism $G$ of $S^{n+2}$ such that $G\circ f_1 = f_2$.  For $n=4$, the result remains true if   both $f_i$ restrict to give stable maps on $B^4$.

\end{lemma}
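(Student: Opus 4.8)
The plan is to run the argument behind Theorem~\ref{thm:main1} in the special case $W_1=W_2=S^{n+2}$, $F_1=F_2=\Sigma^n$: the three lemmas of Section~\ref{sec:conn pair} apply, and the only additional task is to package the isotopies they produce into a single self-homeomorphism $G$ of $S^{n+2}$. First I would record that, $\Sigma^n$ being locally flat, each $f_i$ extends to an embedding $\bar f_i\co(\R^{n+2},\R^n)\to(S^{n+2},\Sigma^n)$, so that the first of the three lemmas applies to $\bar f_1$ and $\bar f_2$: because $\Sigma^n$ is connected, there is an orientation preserving self-homeomorphism $H$ of the pair $(S^{n+2},\Sigma^n)$ with $(H\circ\bar f_1)\big((B^{n+2},B^n)\big)\subset\bar f_2\big(\inte(B^{n+2},B^n)\big)$. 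Replacing $f_1$ by $H\circ f_1$ is harmless, since a homeomorphism $G'$ solving the problem for the modified $f_1$ yields $G=G'\circ H$ for the original; so we may assume the image of $f_1$ lies in the interior of the image of $f_2$.

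Next I would put $g=\bar f_2^{-1}\circ\bar f_1$, an embedding $(B^{n+2},B^n)\to\inte(B^{n+2},B^n)$ that extends to a neighborhood of $B^{n+2}$, and view $(\R^{n+2},\R^n)$ as $(\R^n\times\R^2,\R^n\times\{0\})$. Lemma~\ref{lem:unique} then supplies a compactly supported ambient isotopy $\{\alpha_t\}$ of $(\R^n\times\R^2,\R^n\times\{0\})$, with $\alpha_0$ the identity, so that $\phi'\defeq\alpha_1\circ g$ satisfies $\phi'\big((B^n\times B^2,B^n\times\{0\})\big)=(B^n\times B^2,B^n\times\{0\})$ and $\phi'(x,y)=(\phi_1(x),\phi_2(x,y))$. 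Restricted to $B^{n+2}$, $\phi'$ is an orientation preserving product-form self-homeomorphism of the ball pair, so Lemma~\ref{lem:ident} shows it is isotopic to the identity through self-homeomorphisms of $(B^{n+2},B^n)$.

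The remaining step, which is where the condition $n\ge3$ governs the rest of the argument, is to promote that isotopy to a compactly supported self-homeomorphism $\tilde\phi$ of $(\R^{n+2},\R^n)$ agreeing with $\phi'$ on $B^{n+2}$. Following the proof of Lemma~\ref{lem:ident}, the Alexander trick and Theorem~\ref{thm:iso} in dimension $n-1$ reduce to the case $\phi_1=\text{id}$, after which $\phi_2$ is recorded by a map $B^n\to\text{Homeo}_+(B^2)$ fixing $0\in B^2$; since $\text{Homeo}_+(B^2)\simeq S^1$ (Appendix~\ref{app:isob}) and $\pi_k(S^1)=0$ for $k\ge2$, the restriction of this map to $S^{n-1}$ is null-homotopic, so it extends over $\R^n$, and --- with the corresponding statement for $\text{Homeo}_+(S^1)$ in the $B^2$--fibre direction --- one obtains $\tilde\phi$, equal to the identity outside a compact set. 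Setting $\hat G\defeq\tilde\phi^{-1}\circ\alpha_1$, a compactly supported self-homeomorphism of $(\R^{n+2},\R^n)$, one checks $\hat G\circ g=\text{id}$ on $B^{n+2}$; transporting $\hat G$ through the chart $\bar f_2$ and extending it by the identity gives an orientation preserving self-homeomorphism $G'$ of $S^{n+2}$ (in fact of the pair $(S^{n+2},\Sigma^n)$) with $G'\circ f_1=f_2$, and $G=G'\circ H$ completes the construction.

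I expect the main obstacle to be Lemma~\ref{lem:unique}, the deep step: it depends on the existence and uniqueness of codimension-two normal bundles (Appendix~\ref{app:normal}) and, within its proof, on the Annulus Theorem in dimension $n$. For $n=3$ and $n\ge6$ these ingredients come from Kirby and Kirby--Siebenmann, but for $n=5$ the uniqueness of normal bundles over a $5$--manifold requires Quinn's $s$--cobordism theorem in dimension five, while Theorem~\ref{thm:iso} in dimension $4$ (used above) requires the $4$--dimensional Annulus Theorem, so Freedman--Quinn cannot be circumvented; for $n=4$ the whole argument rests on the Annulus Theorem and normal bundle theory in dimension $4$, and the hypothesis that both $f_i$ be stable on $B^4$ is exactly what renders Lemma~\ref{lem:unique} usable in the absence of Quinn's work, becoming superfluous once that work is available. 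A secondary but essential subtlety --- the one the excerpt flags as having been overlooked in earlier treatments --- is that every step above must be carried out with the pair $(S^{n+2},\Sigma^n)$ and not with $S^{n+2}$ alone, which is precisely why one invokes the pair version of the first lemma and codimension-two normal bundle theory rather than the Annulus Theorem for $S^{n+2}$ by itself.
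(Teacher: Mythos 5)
Your proposal follows essentially the same approach the paper intends for this statement. The paper's Appendix~\ref{app:cs} is not an independent proof but a critique of Cappell and Shaneson's four-sentence argument: it points out precisely the two gaps that your write-up closes via Lemma~\ref{lem:unique} and Lemma~\ref{lem:ident} (namely, that arranging $g_1=g_2$ as maps rather than merely $g_1(B^n)=g_2(B^n)$ requires the Stable Homeomorphism Theorem in dimension $n-1$; and that uniqueness of normal bundles controls the images of the bundle neighborhoods but not the bundle maps, leaving a possible fiberwise automorphism to be killed). In other words, the appendix implicitly refers the reader to the Section~\ref{sec:conn pair} machinery, and your proposal carries out that referral explicitly, packaging the resulting isotopies into the global self-homeomorphism $G$ of $S^{n+2}$ that the lemma asserts.

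One point worth tightening: you assert at the outset that local flatness of $\Sigma^n$ alone allows $f_i$ to be extended to $\bar f_i$ on $(\R^{n+2},\R^n)$. That extension actually requires $f_i$ to be a locally flat embedding of the ball pair (so that the boundary sphere pair $f_i(S^{n+1},S^{n-1})$ has a collar, via Brown's theorem for pairs together with normal bundle theory), not merely that $\Sigma^n$ be locally flat. As the appendix emphasizes in its discussion of the $n=4$ case, the question of whether and how $f_i$ extends to a neighborhood of $B^{n+2}$ is exactly where the Brown--Gluck stability hypothesis lives and why that hypothesis is slippery when one regards a knot as a submanifold rather than an embedding; you correctly observe that Quinn's four-dimensional Annulus Theorem renders the condition superfluous, but in the original statement the extension is effectively part of the hypothesis, not a free consequence of local flatness of $\Sigma^n$. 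The dimensional bookkeeping in your last paragraph is in the right spirit, though I would locate the Freedman--Quinn dependence for $n=5$ chiefly in Theorem~\ref{thm:iso} in dimension $4$ (hence the Stable Homeomorphism Theorem in dimension $4$) rather than in a putative five-dimensional $s$--cobordism for the normal bundle.
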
 
In the first  line of their proof, they let $g_i = f_i\big|_{B^n}$.  The main steps are then as follows.
 
 \begin{enumerate}
 \item  The first  step is the claim that $g_2$ is isotopic to $g_1$ and thus,  by using  the isotopy extension theorem, $f_2$ can be modified so that $g_1 = g_2$.    
It isn't mentioned that  one has to  arrange that the isotopy is of the pair $(S^{n+2}, \Sigma^n)$ and not simply of $S^{n+2}$.  This modification is straightforward.  
 
It is not difficult  to arrange that $g_1(B^n) \subset g_2(B^n)$.  To arrange that $g_1(B^n) = g_2(B^n)$, one can use the Annulus Theorem in dimension $n$.  Again, use the isotopy extension theorem to extend this isotopy to $S^{n+2}$.  However, the claim that $g_1 = g_2$, rather than just having the same images,  $g_1(B^n) = g_2(B^n)$, is not justified.  One must use the fact that orientation preserving homeomorphisms of $B^n$ are isotopic, a fact that depends on the Stable Homeomorphism  Theorem in dimension $n-1$.  

In the case that $n=4$, it is not clear precisely what is meant by the map being {\it stable}.  Such a notion could be made well-defined in the setting that a knot is viewed as an embedding rather than as a {\it submanifold}; without the Stable Homeormorphism Theorem in dimension 4, it was possible that two locally flat embeddings of $S^4$ in $S^6$ could have the same oriented image but be inequivalent as knots.   In the foundational papers on the general theory of stable mappings and manifolds, written by Brown and Gluck~\cite{MR158383, MR158384, MR158385}, the notion of stable maps concerns open subsets of $S^n$.  Thus, it seems that to be precise, one would want to have the $f_i$ defined on open sets $U_i \subset \R^{n+2}$ containing $B^n$.  In that case, the connected sum could depend on the choice of extensions of the $f_i$ from $B^{n+2}$ to $U_i$.  The Annulus Theorem in dimension four dispenses with such issues.
 
 \item The next step is to extend the natural 2--disk bundles over $g_1(B^n)$ that are determined by $f_1$ and $f_2$ to be bundles over $\Sigma^n$. Using this, it is claimed that uniqueness of normal bundles permits one to ensure that $f_1 = f_2$.  However, uniqueness only ensures that the images of the bundle maps (on each fiber) are the same.  It is possible that the two maps differ by a bundle automorphism.  This issue was addressed in Lemma~\ref{lem:ident}.
 
 \end{enumerate}

  
  \section{Connected sums in codimension greater than two}
  
  If one replaces the set-up of Theorem~\ref{thm:main1} by making the single change that the submanifolds are of codimension $k \ge 3$, one can ask whether the same results  holds.  The answer is not evident.
  
\begin{question}  Is the following statement true? Given pairs of embeddings, $(\phi_1, \psi_1)$ and  $(\phi_2, \psi_2)$ of $(\R^{n}, \R^{n-k})$ into $(W_i, F_i)$, the manifold  pairs $F_1 \cs_{\phi_1, \psi_1}  F_2 \subset W_1 \cs W_2 $ and $F_1 \cs_{\phi_2, \psi_2}  F_2 \subset W_1 \cs W_2$ are oriented homeomorphic.  A homeomorphism can be chosen that identifies the canonical splitting $(n\! -\! 1)$--spheres.
\end{question}
    
\noindent{\bf Comments.}  For $k = n-1$ the result is probably easily proved.  For other $k$ it is not at all clear.  On the one hand, the tools used in the codimension two case, such as the existence and uniqueness of normal bundles, are lacking.  On the other hand, our constructions are largely local and in higher codimension it seems there cannot be any local complexity; for instance, all knots of codimension $k \ge 3$ in $S^n$ are trivial by a result of  Stallings~\cite{MR149458}.
\bibliography{../BibTexComplete.bib}

\begin{thebibliography}{10}

\bibitem{stack:320741}
{Karatug Ozan} Bircan.
\newblock Space of homeomorphisms {H}omeo$({S}^1)$ of ${S}^1$ deformation
  retracts onto ${O}(2)$.
\newblock Mathematics Stack Exchange.
\newblock URL: math.stackexchange.com/q/320741.

\bibitem{MR0133812}
Morton Brown.
\newblock Locally flat imbeddings of topological manifolds.
\newblock {\em Ann. of Math.}, 75:331--341, 1962.

\bibitem{MR158383}
Morton Brown and Herman Gluck.
\newblock Stable structures on manifolds. {I}. {H}omeomorphisms of ${S}^{n}$.
\newblock {\em Ann. of Math. (2)}, 79:1--17, 1964.

\bibitem{MR158384}
Morton Brown and Herman Gluck.
\newblock Stable structures on manifolds. {II}. {S}table manifolds.
\newblock {\em Ann. of Math. (2)}, 79:18--44, 1964.

\bibitem{MR158385}
Morton Brown and Herman Gluck.
\newblock Stable structures on manifolds. {III}. {A}pplications.
\newblock {\em Ann. of Math. (2)}, 79:45--58, 1964.

\bibitem{MR321099}
Sylvain~E. Cappell and Julius~L. Shaneson.
\newblock Topological knots and knot cobordism.
\newblock {\em Topology}, 12:33--40, 1973.

\bibitem{MR140120}
Jean Cerf.
\newblock Topologie de certains espaces de plongements.
\newblock {\em Bull. Soc. Math. France}, 89:227--380, 1961.

\bibitem{MR780581}
Robert~D. Edwards.
\newblock The solution of the {$4$}-dimensional annulus conjecture (after
  {F}rank {Q}uinn).
\newblock In {\em Four-manifold theory ({D}urham, {N}.{H}., 1982)}, volume~35
  of {\em Contemp. Math.}, pages 211--264. Amer. Math. Soc., Providence, RI,
  1984.

\bibitem{MR283802}
Robert~D. Edwards and Robion~C. Kirby.
\newblock Deformations of spaces of imbeddings.
\newblock {\em Ann. of Math. (2)}, 93:63--88, 1971.

\bibitem{MR1201584}
Michael~H. Freedman and Frank Quinn.
\newblock {\em Topology of 4-manifolds}, volume~39 of {\em Princeton
  Mathematical Series}.
\newblock Princeton University Press, Princeton, NJ, 1990.

\bibitem{MR679066}
Michael~Hartley Freedman.
\newblock The topology of four-dimensional manifolds.
\newblock {\em J. Differential Geom.}, 17(3):357--453, 1982.

\bibitem{MR321124}
Bjorn Friberg.
\newblock A topological proof of a theorem of {K}neser.
\newblock {\em Proc. Amer. Math. Soc.}, 39:421--426, 1973.

\bibitem{MR242165}
Robion~C. Kirby.
\newblock Stable homeomorphisms and the annulus conjecture.
\newblock {\em Ann. of Math. (2)}, 89:575--582, 1969.

\bibitem{MR0400237}
Robion~C. Kirby and Laurence~C. Siebenmann.
\newblock {\em Normal bundles for codimension {$2$} locally flat imbeddings, in
  Geometric topology ({P}roc. {C}onf., {P}ark {C}ity, {U}tah, 1974)}, volume
  438.
\newblock Lecture Notes in Math., 1975.

\bibitem{MR0645390}
Robion~C. Kirby and Laurence~C. Siebenmann.
\newblock {\em Foundational essays on topological manifolds, smoothings, and
  triangulations}.
\newblock Princeton University Press, Princeton, N.J.; University of Tokyo
  Press, Tokyo, 1977.
\newblock With notes by John Milnor and Michael Atiyah, Annals of Mathematics
  Studies, No. 88.

\bibitem{MR180986}
J.~M. Kister.
\newblock Microbundles are fibre bundles.
\newblock {\em Ann. of Math. (2)}, 80:190--199, 1964.

\bibitem{MR1544816}
Hellmuth Kneser.
\newblock {Die {D}eformationss\"{a}tze der einfach zusammenh\"{a}ngenden
  {F}l\"{a}chen}.
\newblock {\em Math. Z.}, 25:362--372, 1926.

\bibitem{MR161346}
J.~Milnor.
\newblock Microbundles. {I}.
\newblock {\em Topology}, 3:53--80, 1964.

\bibitem{MR82103}
John Milnor.
\newblock On manifolds homeomorphic to the {$7$}-sphere.
\newblock {\em Ann. of Math. (2)}, 64:399--405, 1956.

\bibitem{MR116352}
Richard~S. Palais.
\newblock Natural operations on differential forms.
\newblock {\em Trans. Amer. Math. Soc.}, 92:125--141, 1959.

\bibitem{MR679069}
Frank Quinn.
\newblock Ends of maps. {III}. {D}imensions {$4$} and {$5$}.
\newblock {\em J. Differential Geometry}, 17(3):503--521, 1982.

\bibitem{MR149458}
John Stallings.
\newblock On topologically unknotted spheres.
\newblock {\em Ann. of Math. (2)}, 77:490--503, 1963.

\end{thebibliography}
\bibliographystyle{plain}	

\end{document}